\documentclass[12pt,reqno]{amsart}
\usepackage{enumerate, amsmath, amsthm, amsfonts, amssymb, xy,  mathrsfs, graphicx, paralist, lscape, array, mathptmx, mathtools}
\usepackage[usenames, dvipsnames]{color}
\usepackage{hyperref}
\usepackage{float}
\usepackage{linegoal}
\usepackage{graphicx}
\usepackage{inputenc}
\usepackage{subcaption}
\usepackage{filecontents}
\usepackage{graphics}
\usepackage{xcolor}
\usepackage{relsize}
\usepackage[margin=1in]{geometry}
\usepackage{tikz-cd}
\usepackage{fancyhdr,color}
\usepackage{tikz,tikz-cd,graphicx,multicol}
\usetikzlibrary{shapes.geometric}
\usepackage{euscript,eufrak,nicefrac,enumitem}
\usepackage{amsfonts,amsmath,amsthm}
\usepackage{pictexwd,dcpic}
\allowdisplaybreaks
\usepackage{comment}
\usetikzlibrary{knots,matrix,arrows,decorations.markings,decorations.pathreplacing}

\theoremstyle{plain}

\numberwithin{equation}{section}
\newtheorem{theorem}{Theorem}[section]
\newtheorem{proposition}{Proposition}[theorem]
\newtheorem{lemma}[theorem]{Lemma}
\newtheorem{corollary}[equation]{Corollary}
\newtheorem{conjecture}[equation]{Conjecture}

\theoremstyle{definition}
\newtheorem{definition}{Definition}[section]

\theoremstyle{definition}
\newtheorem{remark}{Remark}[section]

\DeclareMathOperator{\Hom}{Hom}
\DeclareMathOperator{\End}{End}

\newcommand{\CC}{\mathbb{C}}

\newcommand{\C}{\mathcal{C}}

\DeclareMathOperator{\ev}{ev}
\DeclareMathOperator{\coev}{coev}
\DeclareMathOperator{\id}{id}

\newcommand{\be}{\begin{equation}}
\newcommand{\ee}{\end{equation}}

\newcommand{\KP}[1]{
  \begin{tikzpicture}[baseline=-\dimexpr\fontdimen22\textfont2\relax]
  #1
  \end{tikzpicture}
}

\newcommand{\OPxa}{
  \KP{
    \draw[color=black,thick,->] (-0.4,-0.4) -- (0.4,0.4);
    \draw[color=black,thick, <-] (-0.4,0.4) -- (-0.1,0.1);
    \draw[color=black,thick] (0.1,-0.1) -- (0.4,-0.4);
  }
}

\newcommand{\OIdma}{
  \KP{
    \draw[color=black,thick,->] (-0.4,-0.4) .. controls (0.01,0) .. (-0.4,0.4);
    \draw[color=black,thick,->] (0.4,-0.4) .. controls (-0.01,0) .. (0.4,0.4);
  }
}
 
\rfoot{\thepage}

\title{A comparison between $SL_n$ spider categories}
\author{Anup Poudel}
\address{Department of Mathematics, The Ohio State University}
\email{poudel.33@osu.edu}

\subjclass{57K31, 17B37}
\keywords{\textit{Webs, HOMFLYPT relations, skein categories, skein modules}}

\begin{document}
\maketitle

\begin{abstract}
    We prove a conjecture of L\^{e} and Sikora by providing a comparison between various existing $SL_n$ skein theories. While doing so, we show that the full subcategory of the spider category, $\mathcal{S}p(SL_n)$, defined by Cautis-Kamnitzer-Morrison, whose objects are monoidally generated by the standard representation and its dual, is equivalent as a spherical braided category to Sikora's quotient category. This also answers a question from Morrison's Ph.D. thesis. Finally, we show that the skein modules associated to the CKM and Sikora's webs are isomorphic.
    
    \end{abstract}

\section{Introduction}
The category of representations of the quantum group $U_q(\mathfrak{sl_n})$ has a spherical and braided tensor (ribbon) structure. In particular, since it is a pivotal monoidal category one can describe the category using diagrammatic calculus. By introducing the notion of \textit{combinatorial spiders} in \cite{kup1}, Kuperberg    
first provided a diagrammatic presentation  for the category of finite dimensional representations of $U_q(\mathfrak{g})$, where $\mathfrak{g}$ is a simple Lie algebra of rank $2$. The diagrammatic presentation  for a representation category has many advantages. For example, diagrammatic presentations lead naturally to the definition of  skein modules. \textit{Skein modules} (c.f. Def. \ref{skeinmoddef}) have become central objects of study in the field of quantum topology connecting them to quantum invariants of $3$-manifolds, topological quantum field theory, quantum cluster algebras and quantum hyperbolic geometry, see for example \cite{bw1, bw2, bfk, fg, fkl, le, mu, ps} and references within. Using diagrammatic presentation for a representation category of a quantum group, one obtains a natural description for its associated skein category (c.f. Section \ref{skeinmodiso}) which allows one to understand the associated skein modules.

Extending Kuperberg's work, Kim  \cite{kim} proposed a presentation of the category of finite dimensional representations of   $U_q(\mathfrak{sl_4})$ where the colors correspond to the exterior powers of the standard representation and its dual. Sikora in \cite{sik} provided a presentation for the braided spherical monoidal category coming from the representation theory of $U_q(\mathfrak{sl}_n)$ using the standard representation and its dual as objects. Further, Morrison proposed a complete set of generators and relations (conjecturally) in \cite{mor} for the spherical monoidal category, $\mathcal{R}ep(U_q(\mathfrak{sl}_n))$  where the colors correspond to the exterior powers of the standard representation and its dual.  Later, Cautis, Kamnitzer and Morrison proved Morrison's conjecture in \cite{ckm} using  skew-Howe duality.

The braided monoidal structure on $U_q(\mathfrak{sl}_n)$ was first explored diagrammatically by Murakami et al. in \cite{moy} (also see \cite{kw}).  They provide web relations that align with the untagged relations (\ref{ckmb1}--\ref{boxrelationsckm}) in \cite{ckm}. However, they provide no discussion of a complete set of relations for this category. Later, Sikora \cite{sik} explained the connection between his presentation for $\mathcal{R}ep(U_q(\mathfrak{sl}_n))$ and generators and relations presented in \cite{moy}. Further, in his thesis \cite{mor}, Morrison poses a question regarding the relation between his conjecture and the work of Sikora. In this paper, we answer Morrison's question and also prove Conjecture \ref{lesikconjecture} \cite{lesik} which is related to the question posed by Morrison in his thesis.

There is a braided spherical category based on the HOMFLYPT skein relations.  Early on it was realized \cite{TW} that by specializing the variables in HOMFLYPT one could obtain a category that mapped down to the categories of $U_q(\mathfrak{sl}_n)$ representations. One can build skeins that behave algebraically like Young symmetrizers \cite{Y, M,  MA, Li, bla}.  The category is missing both generators and  relations that say that the $n^{\text{th}}$ exterior power of the standard representation and its dual are trivial.  Sikora's model adds  $n$-valent vertices that are sources and sinks corresponding to these invariant tensors and a relation for cancelling them.  The CKM model adds tags that are sources and sinks and relations for moving them and cancelling them. The work in this paper shows that the two approaches are equivalent.


As in \cite{lesik}, let $\mathfrak{S}_n^b$ be a monoidal category with finite sequences of signs $\pm$ as objects and isotopy classes of $n-$tangles (cf. \cite{sik}) as morphisms. The tensor product is given by horizontal concatenation and composition of morphisms is given by vertical stacking. The category of modules in \cite{lesik} are over a commutative ring with a distinguished invertible element.

Let $\mathcal{C}_n$ be the category of left $U_q(\mathfrak{sl}_n)$-modules isomorphic to finite tensor products of $V$ and $V^*$ where $V$ is the defining representation of $U_q(\mathfrak{sl}_n)$. Define a monoidal functor $RT_0: \mathfrak{S}_n^b \to \mathcal{C}_n$ which for any object $\eta = \{\eta_1, \cdots \eta_k\}\in \mathfrak{S}_n^b$, is defined as $RT_0(\eta) := V^\eta = V^{\eta_1} \otimes \cdots \otimes V^{\eta_k}$. Note that $V^+ = V$ and $V^- = V^*$. For any $n-$tangle, the functor takes caps and cups to evaluation and coevaluation maps respectively, crossings to the braid isomorphisms and an $n-$sink (resp. source) to a map from the $n-$fold tensor of $V$ (resp. ground ring) to the ground ring (resp. $n-$fold tensor of $V$). Also, a \textit{monoidal ideal} in a monoidal category, $\mathcal{C}$ is a subset $I\subset \Hom(\mathcal{C})$ such that for $x\in I$ and $y\in \Hom(\mathcal{C})$, we have $x\otimes y, y \otimes x \in I$ and $x\circ y, y\circ x \in I$ whenever such compositions are defined. 

\begin{conjecture}[\cite{lesik}] \label{lesikconjecture}
The kernel $\ker RT_0$ is the monoidal ideal generated by elements given in relations (\ref{homflysik}--\ref{unknotsik}).
\end{conjecture}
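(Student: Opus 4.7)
The strategy is to exhibit Sikora's category modulo the ideal $I$ generated by (\ref{homflysik}--\ref{unknotsik}) and the full subcategory $\mathcal{S}p(SL_n)|_{V,V^{*}}$ of the CKM spider as equivalent, via a concrete braided spherical functor. Since CKM prove that their spider is equivalent to $\mathcal{C}_n$, the functor $RT_0$ factors through $\mathcal{S}p(SL_n)|_{V,V^{*}}$, and the conjecture reduces to showing that the induced functor $\overline{\Psi}:\mathfrak{S}_n^b/I\to\mathcal{S}p(SL_n)|_{V,V^{*}}$ is fully faithful (essential surjectivity being tautological). First I would define $\Psi:\mathfrak{S}_n^b\to\mathcal{S}p(SL_n)|_{V,V^{*}}$ on generators by sending crossings to the CKM braiding webs on $V$-strands, cups and caps to the standard (co)evaluations, and an $n$-source (resp.\ $n$-sink) to the CKM composite given by the antisymmetrizer $V^{\otimes n}\to\wedge^{n}V$ followed by the tag $\wedge^{n}V\cong\mathbb{C}$ (resp.\ its reverse).

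The first technical step is to check that each of Sikora's relations (\ref{homflysik}--\ref{unknotsik}) holds in $\mathcal{S}p(SL_n)$. The HOMFLYPT, framing and Reidemeister-II/III relations follow from standard braiding identities on $V\otimes V$ in CKM, while the relations involving the $n$-valent vertices reduce, under the identification above, to tag moves and antisymmetrizer manipulations already present in CKM. This yields $I\subseteq\ker\Psi$ and produces the descent $\overline{\Psi}$. The heart of the proof is then to construct an inverse functor $\Phi:\mathcal{S}p(SL_n)|_{V,V^{*}}\to\mathfrak{S}_n^b/I$. For morphisms, one represents every internal strand labeled $\wedge^{k}V$ as a bundle of $k$ parallel $V$-strands equipped with the quantum antisymmetrizer built from Sikora braidings; under this translation the CKM merges and splits become explicit tangles in $\mathfrak{S}_n^b$, and the tags $\wedge^{n}V\cong\mathbb{C}$ become precisely the $n$-sources and $n$-sinks. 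One then verifies that every CKM relation (\ref{ckmb1}--\ref{boxrelationsckm}), read in this bundled form, is a consequence of the relations in $I$. Checking $\Phi\circ\overline{\Psi}=\id$ and $\overline{\Psi}\circ\Phi=\id$ on generators completes the argument.

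The main obstacle I expect is precisely this last verification: each CKM relation, once higher-weight strands are replaced by antisymmetrized bundles of $V$-strands, must be shown to follow from Sikora's relations alone. The delicate cases are the square/merge relations and the various tag-move relations, where one must track carefully how Sikora's $n$-vertex, the HOMFLYPT resolution, and the quantum antisymmetrizer interact. The key algebraic input will be the identification of the composition of an $n$-sink placed above an $n$-source with a non-zero scalar multiple of the full antisymmetrizer on $V^{\otimes n}$, together with the fact that, modulo $I$, partial contractions of $n$-vertices along fewer than $n$ strands recover the higher antisymmetrizers needed to model $\wedge^{k}V$ for $1<k<n$. If this translation goes through then $\Phi$ and $\overline{\Psi}$ are mutually inverse braided spherical functors, and the conjecture follows.
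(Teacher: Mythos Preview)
Your overall strategy matches the paper's: build a braided spherical functor from Sikora's quotient to the full CKM subcategory on $\{1^{\pm}\}$, verify Sikora's relations there, construct an inverse, and then invoke the CKM equivalence with $\mathcal{C}_n$ to conclude. The paper's functor $\tau$ is your $\overline{\Psi}$, and its $\tau^{-1}$ is your $\Phi$; the identification of source/sink with antisymmetrizer-plus-tag (Lemma~\ref{antisymlemma2}) and the bundling of $\wedge^{k}V$ as $k$ parallel strands with a clasp (Lemma~\ref{addlemma1}) are exactly the ingredients you describe.

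The one substantive difference is in how the inverse direction is organized. You propose to check every CKM relation (\ref{ckmb1})--(\ref{boxrelationsckm}) directly after bundling; the paper instead first proves two reduction results that make this step essentially finite. Theorem~\ref{allckmrelationtheorem1} shows the infinite family of box relations (\ref{boxrelationsckm}) is redundant once Reidemeister, bubble, and $6j$ are in place, and Theorem~\ref{reducecolorthm} shows one may strip colors down until only labels $1$ and $2$ appear. After these reductions the untagged CKM relations collapse to Reidemeister moves in $\tilde{\mathcal S}$, and only the three tag relations (\ref{tagswitchckm})--(\ref{movetagckm}) require individual treatment. Your direct route would in principle succeed, but you would be faced with the full $(k,l,j)$-parametrized family of square relations after bundling, which is exactly the ``delicate case'' you flag; the paper's reductions are what convert that obstacle into a short computation.
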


In this paper, we prove the Conjecture \ref{lesikconjecture},  over an integral domain $R$ where certain quantized integers are invertible (c.f. Section \ref{prelim}), by proving that Sikora's braided spherical category is equivalent (as a braided spherical category) to the full subcategory of the braided spherical category $\mathcal{S}p(SL_n)$ in \cite{ckm} which has as objects the standard representation and its dual.

\subsection{Main results:}
The main results of this paper are:
\begin{itemize}
    \item Theorem \ref{sikorackmequiv} which shows that the braided spherical category coming from \cite{sik} and the full subcategory of the spider category in \cite{ckm} with the standard representation and its dual as objects are equivalent to each other.
    \item Theorem \ref{proofofconjecture} which provides a proof for the Conjecture \ref{lesikconjecture} under our choice of integral domain. 
    \item Theorem \ref{bigckmskeinmodequiv} which shows that the skein modules associated to the Sikora webs is isomorphic to those associated to the CKM webs.
\end{itemize}

\subsection{Outline}
In Section \ref{prelim}, we define the quantized integers (and binomial coefficients) along with the categorical structures that appear in our work. The notion of a free spider category and operations in this category are also introduced. We recall the definitions of the two main categories in this paper: the CKM spider category and Sikora's spherical braided category in Section \ref{ckmsikdef}. We also recall the definition of the MOY category which is closely related to the CKM category. 
In Section \ref{boxes}, the CKM box relations are derived in a diagrammatic fashion using the braided structure of the CKM spider category showing that some of the CKM relations in the braided setting are redundant. This allows us to relate the CKM and Sikora categories easily later in the paper.
In Section \ref{fullsubcat}, we introduce and work with subcategories of (the full subcategory) the MOY category to show that relations in the MOY category is completely characterized by the specialized HOMFLYPT relation (c.f. Figure \ref{blanchethomfly}) in Theorem \ref{reducecolorthm}. This serves as the first step toward relating the CKM and Sikora categories.
In Section \ref{siktockmmain}, with the aid of the main theorem from the previous section and the antisymmetrizer relation (c.f. \ref{antisym}) we prove that Sikora's spherical braided category is equivalent (as a spherical braided category) to the full subcategory of the CKM  spider category. Further, using this result, we prove the Conjecture \ref{lesikconjecture} of L\^{e} and Sikora.
In Section \ref{skeinmodiso}, we extend our result regarding the ribbon categories to an equivalence of skein categories. As a consequence, we we show that the skein module associated to the CKM spider category is isomorphic to the one associated to Sikora's spider category (Theorem \ref{bigckmskeinmodequiv}). 


\subsection{Acknowledgements:} Part of this work was done during the author's PhD dissertation work. The author is grateful to his PhD advisor, Charles Frohman for many helpful discussions and guidance. The author would also like to thank Thang L\^{e} for helpful discussions, and the anonymous referees for many useful suggestions and comments on the earlier version of the paper.

\section{Preliminaries}\label{prelim}
\subsection{Coefficients}  Let $R$ be an integral domain containing $1$, and suppose that $q\in R$ is a unit.  The quantized integers in $R$ are defined to be the sums 
\be [k]=\sum_{i=0}^{k-1}q^{-k+1+2i}.\ee   The quantized factorials are defined recursively by $[0]!=1$ and $[n]!=[n][n-1]!$, and the quantum binomial coefficients
\be \begin{bmatrix} n \\ k\end{bmatrix} =\frac{[n]!}{[k]![n-k]!}.\ee

We will assume that we are working over a ring $R$ having a unit $q^{1/n}$ so that if the category is associated with $sl_n$ then the quantum integers $[1],\ldots,[n]$ are also units.

\subsection{Categories}\label{prelimcats11}
A \textit{pivotal monoidal category}, $\C$, is a rigid monoidal category such that there exist a collection of isomorphisms (a pivotal structure) $a_X: X \xrightarrow{\sim} X^{**}$, natural in $X$, and satisfying $a_{X\otimes Y}=a_X \otimes a_Y$ for all objects $X,Y$ in $\C$. 

For a pivotal monoidal category, $\C$, an object $X \in \C$, and morphisms $f\in \End(X)$, we define the \textit{left} and \textit{right quantum traces}, $Tr_l(f)$ and $Tr_r(f)$ respectively as follows (see \cite{Tur} for more details):
\begin{subequations}
\begin{equation}
    Tr_l(f) = \ev_{X}\circ (\id_{X^*} \otimes f) \circ \coev'_X \in \End(\mathbf{1})
\end{equation}
\begin{equation}
   Tr_r(f) =\ev'_{X}\circ (f \otimes \id_{X^*})\circ \coev_{X} \in \End(\mathbf{1}) 
\end{equation}
\end{subequations}
where (co)$\ev_X$ and (co)$\ev'_X$ are left and right \textit{(co)evaluations}, respectively defined as:
\begin{align*}
    \ev_X:X^*\otimes X \to \mathbf{1} \hspace{1cm} \coev_X: \mathbf{1}\to X \otimes X^*\\
    \ev'_X:X \otimes {}^*X\to \mathbf{1} \hspace{1cm} \coev'_X: \mathbf{1}\to {}^*X \otimes X
\end{align*}

Further, $\C$ is a \textit{spherical monoidal category} if it is a pivotal category such that the left and right quantum traces are the same. In a spherical monoidal category, the \textit{quantum dimension} $d_X$ of an object $X$ is defined to be the quantum trace of identity, $\id_X$. Further, note that $d_X=d_{X^*}$.

A \textit{braided monoidal category} $\C$ is a monoidal category such that there exist a collection of natural isomorphisms (\textit{braid isomorphisms}) $\beta_{X,Y}: X \otimes Y \to Y \otimes X$ for any pair of objects $X,Y\in \C$ that are compatible with the associativity isomorphisms. This compatibility with the associativity isomorphisms in the monoidal category is ensured by the hexagon axiom that the braid isomorphisms satisfy.
We refer the reader to \cite{Tur} for more details. 

We work with spherical braided (\textit{ribbon}) categories via generators and relations. The generators are diagrams carrying labels, where the labels represent irreducible modules over some semisimple Lie algebra. The diagrams represent an element (a vector) in the morphism space (a vector space) of the corresponding category of 
 representations of the Lie algebra. Further, each diagram is considered up to regular isotopy.
In the absence of relations this is called the \textit{free spider category} (on whatever the generators are). The operations are given by (as defined in \cite{kup1}) the following:\\
\textit{Join}: This operation simply allows one to tensor two diagrams (morphisms) by horizontally concatenating.  \\
\textit{Stitch}: For any diagram in $\Hom(A,B)$, this means composing with an evaluation (or coevaluation) to attach a cap or a cup. So, as an example, a stitch could send $\Hom(A\otimes B,C)$ to $\Hom(A\otimes B \otimes B^*, C) \cong \Hom(A, C)$.\\
\textit{Rotation}: This allows one to apply a cyclic permutation on the tensor factors (up to sign). Diagrammatically, this amounts to attaching a cap and a cup to rotate the diagram.
\begin{center}
     \def\svgwidth{8.5cm}
\begingroup%
  \makeatletter%
  \providecommand\color[2][]{%
    \errmessage{(Inkscape) Color is used for the text in Inkscape, but the package 'color.sty' is not loaded}%
    \renewcommand\color[2][]{}%
  }%
  \providecommand\transparent[1]{%
    \errmessage{(Inkscape) Transparency is used (non-zero) for the text in Inkscape, but the package 'transparent.sty' is not loaded}%
    \renewcommand\transparent[1]{}%
  }%
  \providecommand\rotatebox[2]{#2}%
  \newcommand*\fsize{\dimexpr\f@size pt\relax}%
  \newcommand*\lineheight[1]{\fontsize{\fsize}{#1\fsize}\selectfont}%
  \ifx\svgwidth\undefined%
    \setlength{\unitlength}{1213.60378476bp}%
    \ifx\svgscale\undefined%
      \relax%
    \else%
      \setlength{\unitlength}{\unitlength * \real{\svgscale}}%
    \fi%
  \else%
    \setlength{\unitlength}{\svgwidth}%
  \fi%
  \global\let\svgwidth\undefined%
  \global\let\svgscale\undefined%
  \makeatother%
  \begin{picture}(1,0.17461878)%
    \lineheight{1}%
    \setlength\tabcolsep{0pt}%
    \put(0,0){\includegraphics[width=\unitlength,page=1]{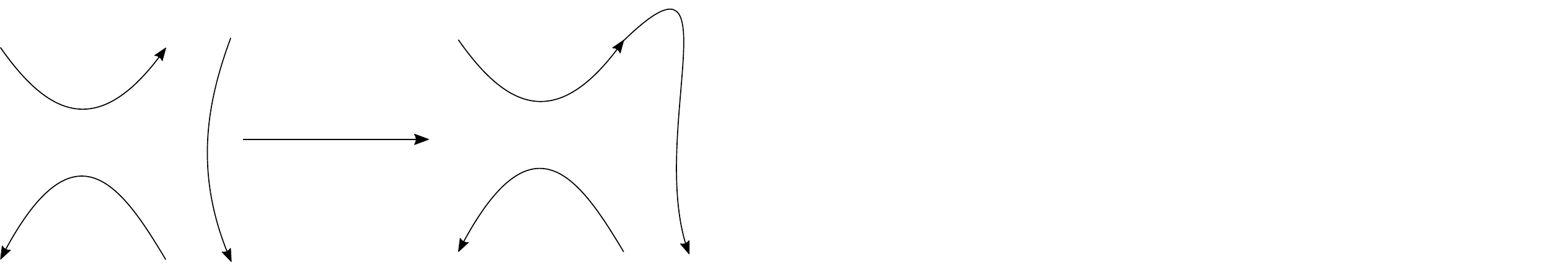}}%
    \put(0.15999326,0.09736884){\color[rgb]{0,0,0}\makebox(0,0)[lt]{\lineheight{1.25}\smash{\begin{tabular}[t]{l}$Stitch$\end{tabular}}}}%
    \put(0,0){\includegraphics[width=\unitlength,page=2]{stitchandrotate.pdf}}%
    \put(0.69731241,0.09269624){\color[rgb]{0,0,0}\makebox(0,0)[lt]{\lineheight{1.25}\smash{\begin{tabular}[t]{l}$Rotate$\end{tabular}}}}%
    \put(0,0){\includegraphics[width=\unitlength,page=3]{stitchandrotate.pdf}}%
  \end{picture}%
\endgroup%

\end{center}
Note that given a spherical tensor category, these (diagrammatic) operations already exist coming from the morphisms in the category.

\section{The braided categories}\label{ckmsikdef}

\subsection{The CKM braided category}\label{ckmcategory1}

As in \cite{ckm}, let $\mathcal{R}ep(SL_n)$ be the category of $U_q(\mathfrak{sl}_n)$-modules generated by tensor products of the fundamental representations. This is a \textit{braided spherical} monoidal category which is a full subcategory of the category of representations of $U_q(\mathfrak{sl}_n)$ where all the morphisms are generated by the wedge product and a version of its adjoint that embeds $\Lambda^{k+l}\mathbb{C}^n$ into $\Lambda^k \CC^n \otimes  \Lambda^l \mathbb{C}^n$:
\begin{align}
    \Lambda^k \CC^n \otimes  \Lambda^l \CC^n \to  \Lambda^{k+l} \CC^n \text{\hspace{1mm} and \hspace{1mm}} \Lambda^{k+l} \CC^n \to \Lambda^k \CC^n \otimes  \Lambda^l \CC^n 
\end{align}
\\
\noindent \textit{The free spider category} $\mathcal{F}Sp(SL_n)$: Define the free spider category to be freely generated by the planar diagrams for morphisms in $\mathcal{R}ep(SL_n)$ as shown below. Objects in $\mathcal{F}Sp(SL_n)$ are subsequences of $\{1^{\pm}, \ldots , (n-1)^{\pm}\}$, where `+' denotes an arrow going upward and `-' denotes an arrow pointing downward. The morphisms are generated by the following:
\begin{figure}[H]
    \centering
  \includegraphics[scale=.5]{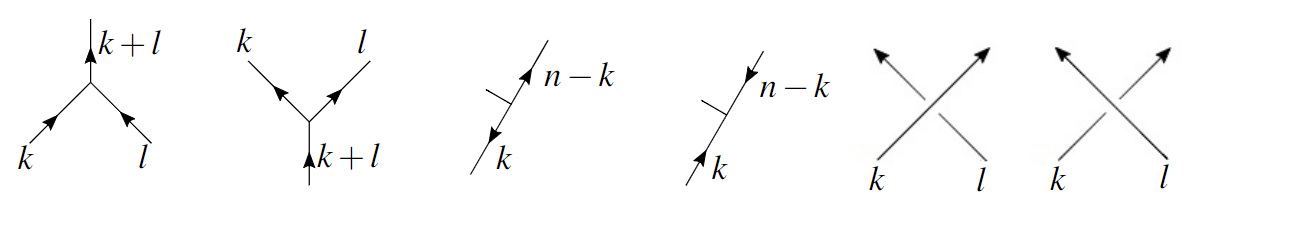}
    \caption{Generators for $\mathcal{F}Sp(SL_n)$}
    \label{generatorsfsp12}
\end{figure}

The spider category, $\mathcal{S}p(SL_n)$, is the quotient of $\mathcal{F}Sp(SL_n)$ by the following relations (together with their mirror reflections and arrow reversals) \cite{ckm}: 
\begin{center}
 \begin{align}
    \vcenter{\includegraphics[scale=.5]{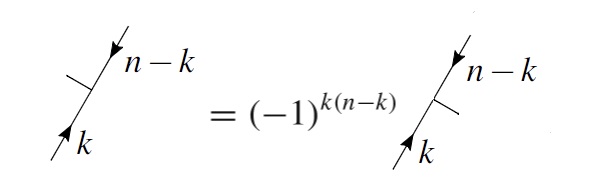}}\label{tagswitchckm}\\
    \vcenter{\includegraphics[scale=.6]{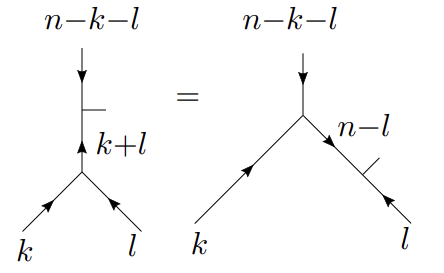}}\label{movetag1ckm}\\
    \vcenter{\includegraphics[scale=.6]{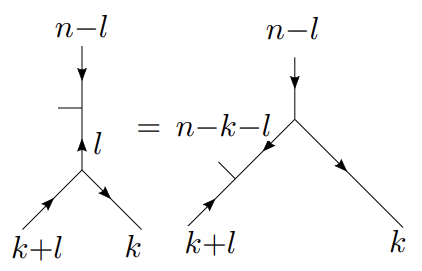}}\label{movetagckm}\\
    \vcenter{\includegraphics[scale=.6]{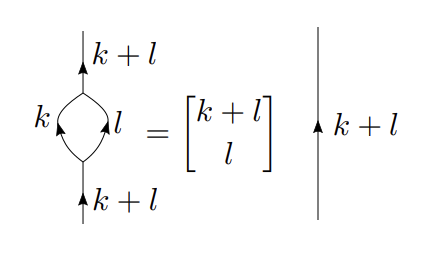}}\label{ckmb1}\\
    \vcenter{\includegraphics[scale=.6]{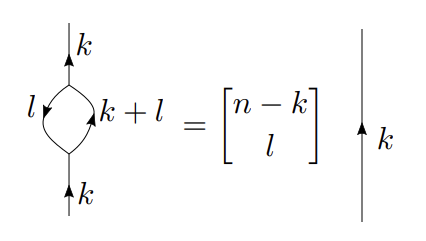}}\label{ckmb2}\\
    \vcenter{\includegraphics[scale=.7]{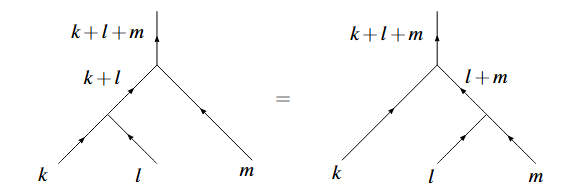}}\label{6jckm1}\\
    \vcenter{\includegraphics[scale=.6]{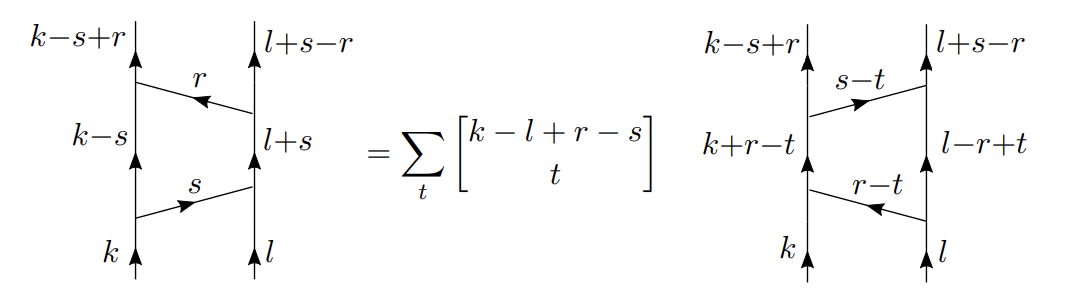}} \label{boxrelationsckm}\\
   \vcenter{ \includegraphics[scale=.6]{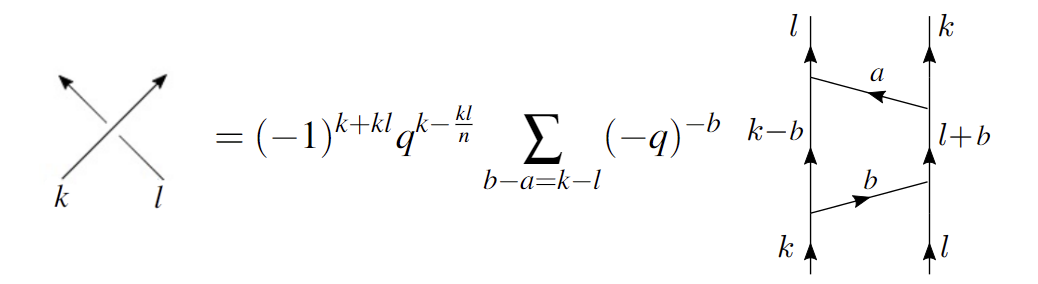}}\label{braiddecomp}\\
    \vcenter{\includegraphics[scale=.4]{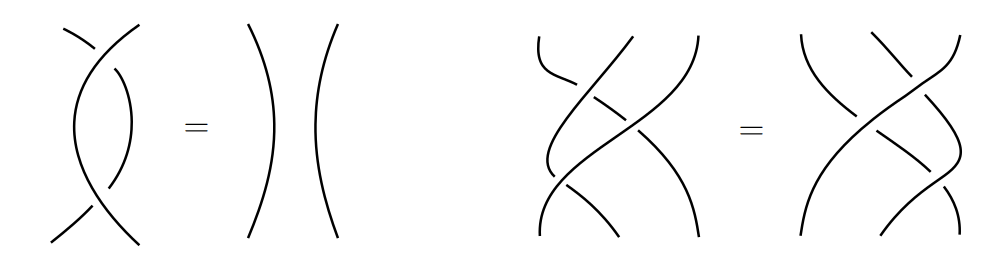}
   \includegraphics[scale=.27]{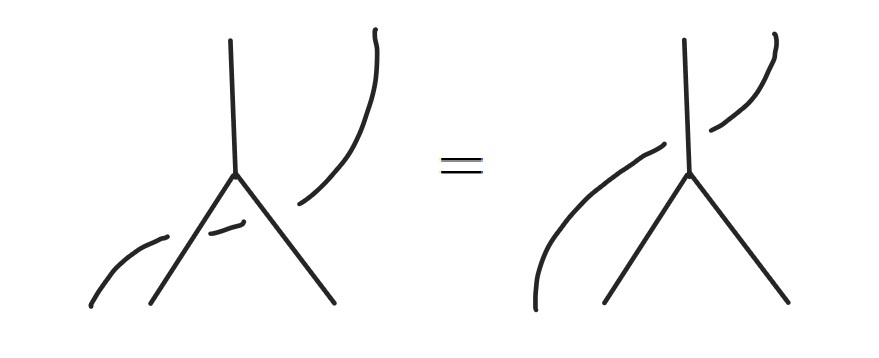}}\label{reidckm1}
\end{align}
   \end{center}

Note, for a negative crossing, the RHS is obtained by changing $q$ to $q^{-1}$ in the relation (\ref{braiddecomp}) above. Further, the Reidemeister relations have diagrams with unlabeled and unoriented edges as those hold for any admissible labels and orientation of edges, along with corresponding diagrams where undercrossings are changed to overcrossings and vice-versa. We will refer to the relations (\ref{tagswitchckm}, \ref{movetag1ckm}, \ref{movetagckm}) as ``tag relations", (\ref{ckmb1}) and (\ref{ckmb2}) as the ``bubble relations", (\ref{6jckm1}) as ``the 6j move" or ``the 6j relation", (\ref{boxrelationsckm}) as ``the box relation" and (\ref{braiddecomp}) as ``the braid relation". 
\\
\begin{remark}
 Even though in \cite{ckm} the authors work over $\mathbb{C}(q)$, these web relations are known to provide a complete set of relations over $\mathbb{Z}[q,q^{-1}]$, for example see \cite{be} for a proof of this fact. 


\end{remark}



\subsection{The Sikora category} \label{sikoracat}
The paper \cite{sik} is not couched in category theoretic terms. However, there is a natural way to describe his work in a category theoretic setting, which has been explicitly carried out in \cite{lesik}. In this paper, we present his work in \cite{sik} in terms of a quotient of a free spider category.
Consider a free spider category with objects sequences in $\{\pm\}$ where again `$+$' means edges going up and `$-$' means edges going down and the morphisms are generated by:
\begin{center}
\def\svgwidth{6cm}
\begingroup%
  \makeatletter%
  \providecommand\color[2][]{%
    \errmessage{(Inkscape) Color is used for the text in Inkscape, but the package 'color.sty' is not loaded}%
    \renewcommand\color[2][]{}%
  }%
  \providecommand\transparent[1]{%
    \errmessage{(Inkscape) Transparency is used (non-zero) for the text in Inkscape, but the package 'transparent.sty' is not loaded}%
    \renewcommand\transparent[1]{}%
  }%
  \providecommand\rotatebox[2]{#2}%
  \newcommand*\fsize{\dimexpr\f@size pt\relax}%
  \newcommand*\lineheight[1]{\fontsize{\fsize}{#1\fsize}\selectfont}%
  \ifx\svgwidth\undefined%
    \setlength{\unitlength}{501.25667061bp}%
    \ifx\svgscale\undefined%
      \relax%
    \else%
      \setlength{\unitlength}{\unitlength * \real{\svgscale}}%
    \fi%
  \else%
    \setlength{\unitlength}{\svgwidth}%
  \fi%
  \global\let\svgwidth\undefined%
  \global\let\svgscale\undefined%
  \makeatother%
  \begin{picture}(1,0.26474416)%
    \lineheight{1}%
    \setlength\tabcolsep{0pt}%
    \put(0,0){\includegraphics[width=\unitlength,page=1]{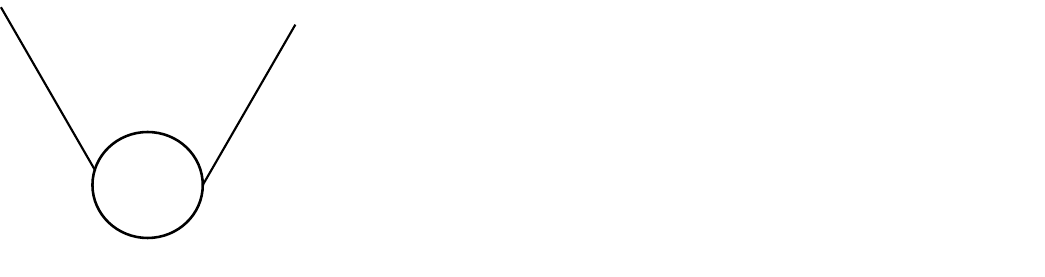}}%
    \put(0.1122939,0.15582709){\color[rgb]{0,0,0}\makebox(0,0)[lt]{\lineheight{1.25}\smash{\begin{tabular}[t]{l}$\cdots$\end{tabular}}}}%
    \put(0,0){\includegraphics[width=\unitlength,page=2]{sikoragen.pdf}}%
  \end{picture}%
\endgroup%
\\
\end{center}
Note, the leftmost vertex is either a source or a sink and each morphism in this category is represented by a $n-$valent ribbon graph considered up to regular isotopy. Just as before, this is a braided spherical category which is a full subcategory of the category of $U_q(\mathfrak{sl}_n)$-modules whose objects are monoidal product of copies of the standard representation and its dual.
We call this category $\Tilde{\mathcal{S}}$ if the morphisms satisfy the following relations:
\begin{align}
\includegraphics[scale=.7]{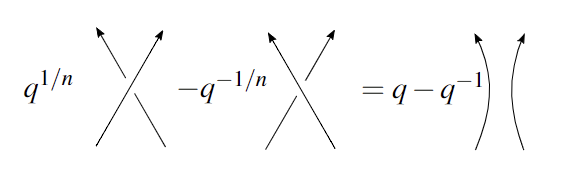}\label{homflysik}\\
\includegraphics[scale=.7]{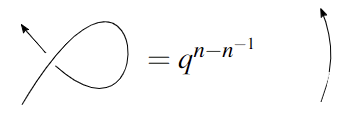}\label{twistsik}\\
\includegraphics[scale=.7]{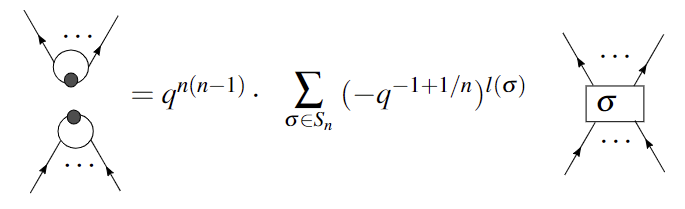}\label{sourcesink}\\
\includegraphics[scale=.7]{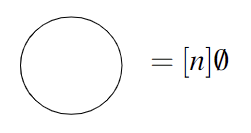}\label{unknotsik}
\end{align}

Here, for any permutation, $\sigma \in S_n$, $l(\sigma)$ denotes its length:
\[l(\sigma) = \#\{(i,j): 1\leq i < j \leq n,  \sigma(i)>\sigma(j)\}
\]
The diagram with $\sigma$ as a coupon represents a positive braid with $l(\sigma)$ crossings representing $\sigma$. Note that along with these relations, this is also the category $\mathfrak{S}_n^b$ in \cite{lesik}.
Check \cite{sik, lesik} for more details.

\subsection{The MOY category}\label{originalmoycat1}
There is no attempt to establish a complete diagrammatic presentation of a category in \cite{moy}. However, the works \cite{mor,ckm} recapitulate the generators and relators given in \cite{moy}. Define a category $\mathcal{MOY}(SL_n)$ to be a spider category with objects sequences in $\{1^\pm, \cdots, (n-1)^\pm, n^\pm\}$ and morphisms generated by the trivalent vertices and crossings given in Figure \ref{generatorsfsp12}. The relators are given by all the relations in $\mathcal{S}p(SL_n)$ except the tag relations. Note that the conventions regarding the objects are the same as in $\mathcal{S}p(SL_n)$, however, in $\mathcal{MOY}(SL_n)$ edges with label $n$ are allowed and there are no tags.


\section{Understanding the CKM box relations}\label{boxes}
\noindent  In this section, we derive the box relations (\ref{boxrelationsckm}) from Reidemeister invariance (\ref{reidckm1}), bubble (\ref{ckmb1} and \ref{ckmb2}) and 6j relations (\ref{6jckm1}). Some results in this section were also observed in \cite{big}.

\begin{lemma}\label{recursion1}
The following equation is a consequence of the bubble (\ref{ckmb1}, \ref{ckmb2}), $6j$ (\ref{6jckm1}), braid (\ref{braiddecomp}) and the Reidemeister (\ref{reidckm1}) relations.
\begin{align*}
\includegraphics[scale=.8]{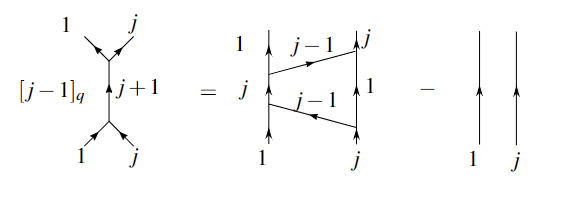}
\end{align*}
\end{lemma}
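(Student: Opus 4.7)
The plan is to derive the identity by starting from a Reidemeister-invariant configuration (most likely a Reidemeister II configuration or an isotopy-trivial crossing–pair built from webs with the relevant labels), and then resolving each crossing using the braid decomposition (\ref{braiddecomp}) on one side while keeping the other side in its isotopy-trivial form. This will give two expressions for the same morphism: a sum over resolutions, with coefficients that are signed powers of $q$ weighted by quantum binomial coefficients, equal to an isotopy-simplified web.

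First, I would rewrite each crossing as a linear combination of web resolutions via (\ref{braiddecomp}), which replaces every crossing by a sum indexed by the "flow" $j$ through the resolution, each term being a pair of trivalent vertices (merge–split) with coefficient $(-q)^{\pm j}$. This produces an equation in the free spider category relating a sum of ladder-type webs to a possibly simpler diagram (after Reidemeister cancellation on the opposite side). Second, I would reduce each ladder term using the 6j relation (\ref{6jckm1}) to convert an $H$-shaped web into an $I$-shaped web, or vice-versa, thereby converting each summand into a form where an internal closed loop appears. Third, I would apply the bubble relations (\ref{ckmb1}, \ref{ckmb2}) to collapse these internal loops into scalar multiples of simpler webs, where the scalar is a quantum binomial coefficient $\begin{bmatrix} n \\ k\end{bmatrix}$ or similar.

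Finally, I would collect all the coefficients coming from (i) the resolution of the crossings, (ii) the 6j reorganization, and (iii) the bubble collapse, and use the standard quantum-binomial identities (Pascal-type recursions) to cancel all but the terms appearing in the stated recursion. Since the left-hand side of (\ref{reidckm1}) is simply the identity (after the Reidemeister move), the vanishing of the sum forces the claimed recursion relation among the surviving webs.

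The main obstacle I expect is the bookkeeping of the $q$-binomial coefficients and the signed powers of $q$ through the 6j and bubble steps, and verifying that the telescoping or Pascal-type identity used at the end produces exactly the coefficients shown in the diagram. A secondary subtlety is choosing the right base configuration of crossings and labels: the recursion will only drop out cleanly for one particular orientation and labeling, so identifying the correct initial configuration that, after resolution, hits every term of the target equation is a non-trivial diagrammatic guess. Once that configuration is fixed, the rest of the argument is algebraic manipulation of $q$-binomials.
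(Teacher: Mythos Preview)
Your outline matches the paper's approach: start from a Reidemeister~II pair of crossings (specifically between a strand labeled $1$ and a strand labeled $j$), expand each crossing via the braid decomposition~(\ref{braiddecomp}), simplify the resulting ladder terms with the $6j$ and bubble relations, and equate with the identity on the trivial side. The only correction is that you overestimate the algebraic bookkeeping. Because one of the two strands carries the label~$1$, the braid decomposition has only two terms per crossing, so there are four terms in total; after the bubble relations the surviving scalars are just $[j]$ and $[j+1]$. The ``Pascal-type'' identity you anticipate collapses to the elementary quantum-integer recurrence
\[
(-q-q^{-1})[j]+[j+1]=[j-1],
\]
i.e.\ $[2][j]=[j-1]+[j+1]$; no quantum binomials beyond $\begin{bmatrix} m\\1\end{bmatrix}=[m]$ enter at this stage---those only appear in the inductive generalization, Lemma~\ref{generalizedrecursion1}. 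Your ``secondary subtlety'' about choosing the base configuration is thus resolved by taking the $1\times j$ Reidemeister~II move.
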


\begin{proof}
Recalling the decomposition of braid isomorphism in terms of webs given in relation (\ref{braiddecomp}) and then applying Reidemeister II invariance gives us the following.\\ 
\begin{center}
 \includegraphics[scale=.6]{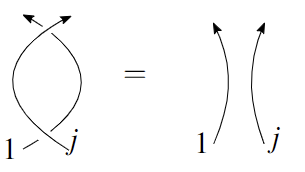}\\
\includegraphics[scale=.6]{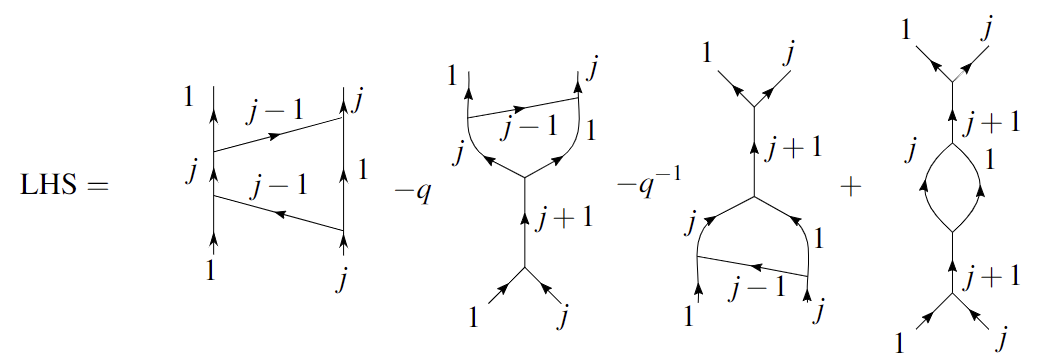}\\
\includegraphics[scale=.6]{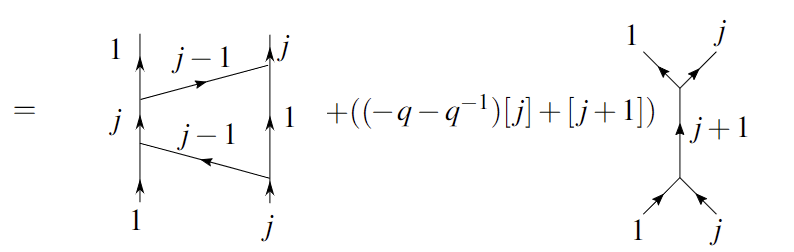}
\end{center}
The claim follows from:
\begin{align*}
    (-q-q^{-1})[j]+[j+1] = -[j-1].
\end{align*}
\end{proof}

\newpage
\begin{lemma}\label{generalizedrecursion1}
The following equation is a consequence of the bubble (\ref{ckmb1}, \ref{ckmb2}), the $6j$ (\ref{6jckm1}), braid (\ref{braiddecomp}) and the Reidemeister (\ref{reidckm1}) relations.
\begin{center}
\def\svgwidth{15.5cm}
\begingroup%
  \makeatletter%
  \providecommand\color[2][]{%
    \errmessage{(Inkscape) Color is used for the text in Inkscape, but the package 'color.sty' is not loaded}%
    \renewcommand\color[2][]{}%
  }%
  \providecommand\transparent[1]{%
    \errmessage{(Inkscape) Transparency is used (non-zero) for the text in Inkscape, but the package 'transparent.sty' is not loaded}%
    \renewcommand\transparent[1]{}%
  }%
  \providecommand\rotatebox[2]{#2}%
  \newcommand*\fsize{\dimexpr\f@size pt\relax}%
  \newcommand*\lineheight[1]{\fontsize{\fsize}{#1\fsize}\selectfont}%
  \ifx\svgwidth\undefined%
    \setlength{\unitlength}{1013.10571866bp}%
    \ifx\svgscale\undefined%
      \relax%
    \else%
      \setlength{\unitlength}{\unitlength * \real{\svgscale}}%
    \fi%
  \else%
    \setlength{\unitlength}{\svgwidth}%
  \fi%
  \global\let\svgwidth\undefined%
  \global\let\svgscale\undefined%
  \makeatother%
  \begin{picture}(1,0.20902299)%
    \lineheight{1}%
    \setlength\tabcolsep{0pt}%
    \put(0,0){\includegraphics[width=\unitlength,page=1]{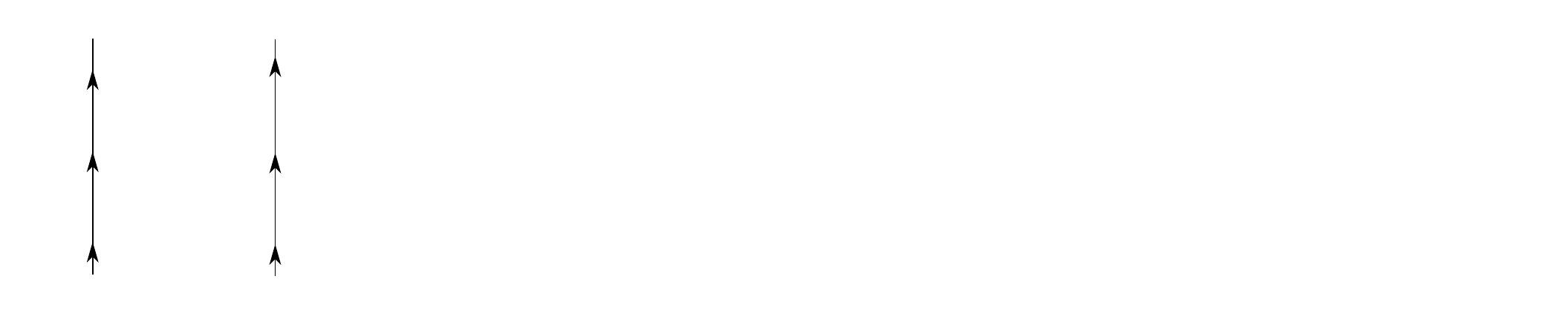}}%
    \put(0.27011263,0.09870761){\color[rgb]{0,0,0}\makebox(0,0)[lt]{\lineheight{1.25}\smash{\begin{tabular}[t]{l}$=\begin{bmatrix} j-1 \\ k-1\end{bmatrix}$\end{tabular}}}}%
    \put(0.49050248,0.10377011){\color[rgb]{0,0,0}\makebox(0,0)[lt]{\lineheight{1.25}\smash{\begin{tabular}[t]{l}$+\begin{bmatrix} j-1 \\ k\end{bmatrix}$\end{tabular}}}}%
    \put(0,0){\includegraphics[width=\unitlength,page=2]{Lemma2.pdf}}%
    \put(0.40722454,0.10672336){\color[rgb]{0,0,0}\makebox(0,0)[lt]{\lineheight{1.25}\smash{\begin{tabular}[t]{l}$j+1$\end{tabular}}}}%
    \put(0.33458549,0.18832253){\color[rgb]{0,0,0}\makebox(0,0)[lt]{\lineheight{1.25}\smash{\begin{tabular}[t]{l}$1$\end{tabular}}}}%
    \put(0.42355961,0.19232826){\color[rgb]{0,0,0}\makebox(0,0)[lt]{\lineheight{1.25}\smash{\begin{tabular}[t]{l}$j$\end{tabular}}}}%
    \put(0.35490997,0.01426633){\color[rgb]{0,0,0}\makebox(0,0)[lt]{\lineheight{1.25}\smash{\begin{tabular}[t]{l}$1$\end{tabular}}}}%
    \put(0.42518668,0.0151558){\color[rgb]{0,0,0}\makebox(0,0)[lt]{\lineheight{1.25}\smash{\begin{tabular}[t]{l}$j$\end{tabular}}}}%
    \put(0.04266804,0.00537036){\color[rgb]{0,0,0}\makebox(0,0)[lt]{\lineheight{1.25}\smash{\begin{tabular}[t]{l}$1$\end{tabular}}}}%
    \put(-0.00172595,0.0950182){\color[rgb]{0,0,0}\makebox(0,0)[lt]{\lineheight{1.25}\smash{\begin{tabular}[t]{l}$k+1$\end{tabular}}}}%
    \put(0.0252815,0.16149801){\color[rgb]{0,0,0}\makebox(0,0)[lt]{\lineheight{1.25}\smash{\begin{tabular}[t]{l}$1$\end{tabular}}}}%
    \put(0.08656768,0.1552656){\color[rgb]{0,0,0}\makebox(0,0)[lt]{\lineheight{1.25}\smash{\begin{tabular}[t]{l}$k$\end{tabular}}}}%
    \put(0.18109379,0.16565309){\color[rgb]{0,0,0}\makebox(0,0)[lt]{\lineheight{1.25}\smash{\begin{tabular}[t]{l}$j$\end{tabular}}}}%
    \put(0.1852488,0.09917316){\color[rgb]{0,0,0}\makebox(0,0)[lt]{\lineheight{1.25}\smash{\begin{tabular}[t]{l}$j-k$\end{tabular}}}}%
    \put(0.09072267,0.08255317){\color[rgb]{0,0,0}\makebox(0,0)[lt]{\lineheight{1.25}\smash{\begin{tabular}[t]{l}$k$\end{tabular}}}}%
    \put(0.16711982,0.0044809){\color[rgb]{0,0,0}\makebox(0,0)[lt]{\lineheight{1.25}\smash{\begin{tabular}[t]{l}$j$\end{tabular}}}}%
    \put(0,0){\includegraphics[width=\unitlength,page=3]{Lemma2.pdf}}%
    \put(0.58682422,0.00823114){\color[rgb]{0,0,0}\makebox(0,0)[lt]{\lineheight{1.25}\smash{\begin{tabular}[t]{l}$1$\end{tabular}}}}%
    \put(0.64375733,0.00734168){\color[rgb]{0,0,0}\makebox(0,0)[lt]{\lineheight{1.25}\smash{\begin{tabular}[t]{l}$j$\end{tabular}}}}%
    \put(0,0){\includegraphics[width=\unitlength,page=4]{Lemma2.pdf}}%
  \end{picture}%
\endgroup%

\end{center}
\end{lemma}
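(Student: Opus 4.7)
The plan is to prove Lemma \ref{generalizedrecursion1} by induction on $k$, treating $j$ as a parameter with $1 \le k \le j-1$. The base case $k=1$ should reduce, after noting that $\begin{bmatrix}j-1\\0\end{bmatrix}=1$ and $\begin{bmatrix}j-1\\1\end{bmatrix}=[j-1]$, to exactly the identity established in Lemma \ref{recursion1} (possibly with a relabeling of the central edge and one application of a bubble relation to normalize the result). So the base case is essentially given for free by the previous lemma.

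For the inductive step, I would first use the $6j$ relation (\ref{6jckm1}) to expand the $k$-labeled edge on the left-hand side, splitting it as $k = (k-1) + 1$. This produces a diagram in which the crossing of the $1$-strand meets a configuration whose ``internal'' label is $k-1$ rather than $k$, so the inductive hypothesis (for the exponent $k-1$) applies. Substituting the two-term expansion from the inductive hypothesis yields a sum of two diagrams weighted by $\begin{bmatrix}j-1\\k-2\end{bmatrix}$ and $\begin{bmatrix}j-1\\k-1\end{bmatrix}$. Then I would reverse the $6j$ move (reabsorbing the split edge) on each of these summands; some of the resulting subdiagrams will contain bubbles that collapse via (\ref{ckmb1}, \ref{ckmb2}), producing extra factors of the form $[j-k]$, $[k]$, or $[j]$.

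At this point the identity becomes purely algebraic: one needs to verify that the combination produced by the two inductively generated terms, after the bubble evaluations, equals $\begin{bmatrix}j-1\\k-1\end{bmatrix}$ times one diagram and $\begin{bmatrix}j-1\\k\end{bmatrix}$ times the other. This is exactly the quantum Pascal identity
\[
\begin{bmatrix} j-1 \\ k\end{bmatrix} \;=\; \begin{bmatrix} j-2 \\ k-1\end{bmatrix} \;+\; q^{\pm(\text{shift})}\begin{bmatrix} j-2 \\ k\end{bmatrix},
\]
combined with $[j-k]+q^{\pm\cdot}[k]$-type relations that come out of the bubble contractions. As in the proof of Lemma \ref{recursion1}, the closing step reduces to a small quantum-integer identity analogous to $(-q-q^{-1})[j]+[j+1]=[j-1]$.

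The main obstacle, and the place where I would be most careful, is bookkeeping: tracking the internal labels, the orientations at trivalent vertices, and the $q$-power prefactors produced by each $6j$ and bubble evaluation so that the coefficients assemble into precisely the asserted quantum binomials rather than some other rational function of $[\,\cdot\,]$'s. If the straight induction on $k$ runs into sign or coefficient mismatch issues, the fallback plan is to induct instead on the label $j$, using Lemma \ref{recursion1} to peel off one strand from the $j$-bundle at a time; this gives the same quantum Pascal recursion but with the roles of $j$ and $k$ interchanged in the bookkeeping, which can be cleaner depending on how the diagram is oriented.
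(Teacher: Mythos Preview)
Your overall strategy---induction plus a quantum Pascal identity---is sound, but it diverges from the paper in both the induction variable and the mechanism of the inductive step, and there is one point where you appear to misread the diagram.

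The paper inducts on $j-k$ rather than on $k$. Its base case is $j-k=1$, which (like your $k=1$ base case) reduces to Lemma~\ref{recursion1}. The substantive difference is in the inductive step: the paper does not perform a $6j$ split of the $k$-edge. Instead it applies Lemma~\ref{recursion1} \emph{itself}, with its parameter specialized to $k$, as a local replacement inside the left-hand box. This produces a sum of diagrams, one of which is again a box of the same shape but with $j-k$ decreased by one, so the induction hypothesis applies immediately. In other words, Lemma~\ref{recursion1} is not just the base case but the engine of every inductive step; this is what keeps the bookkeeping short and makes the binomial recursion appear almost for free.

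Your plan to split the $k$-edge via a $6j$ move and then invoke the $(k-1)$ hypothesis is plausible in spirit, but note that the left-hand side of the lemma is a pure web (a box diagram)---there is no crossing present. Your reference to ``the crossing of the $1$-strand'' suggests you may be picturing the wrong diagram. A single $6j$ split will not by itself produce the $(k-1)$ instance of the same box; you would still need an additional move (essentially Lemma~\ref{recursion1}, or an equivalent Reidemeister~II trick) to reshape the result into something the induction hypothesis recognizes. So your approach is not incorrect, but it is underspecified at exactly the spot where the paper's direct use of Lemma~\ref{recursion1} does the work cleanly. Your fallback idea of peeling one strand off the $j$-bundle is in fact closer to what the paper does.
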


\begin{proof}
This follows by induction on $j-k$. \\
\textit{Base case:} when $j-k=1$, this follows from Lemma \ref{recursion1} above. Let this hold for all values up to $j-k=m$. Now, apply Lemma \ref{recursion1} with $j=k$ on LHS to get:

    \includegraphics[scale=.65]{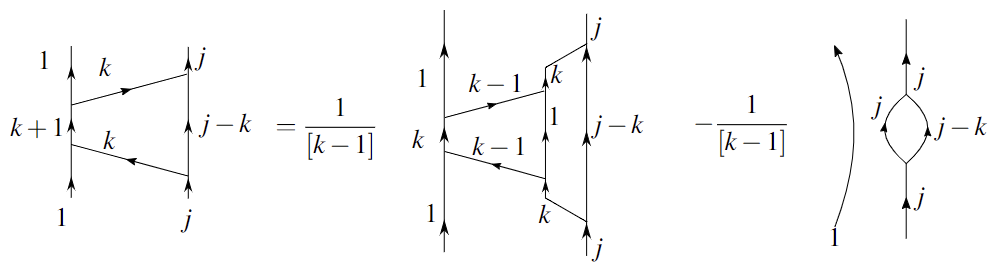}\\
    \hspace*{4cm}\includegraphics[scale=.65]{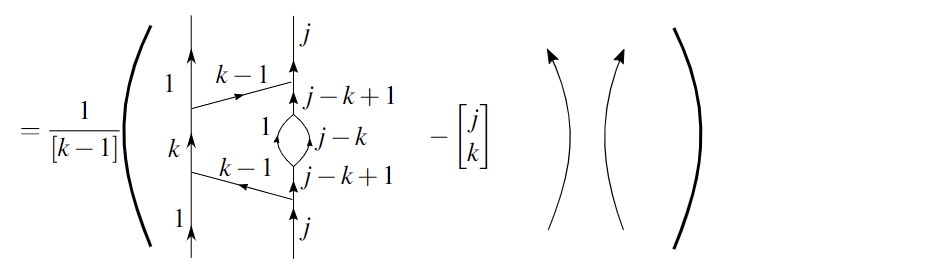}\\
    \hspace*{4cm}\includegraphics[scale=.65]{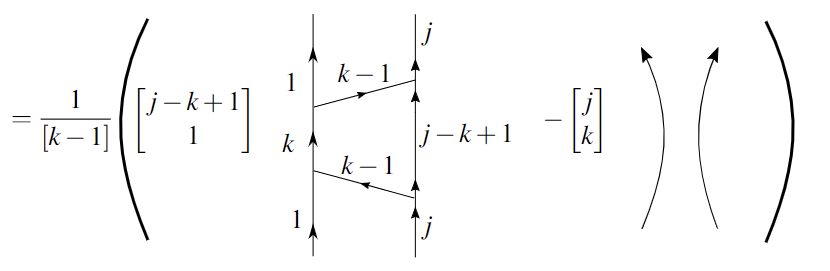}\\

Now the proof follows by Induction hypothesis on the box obtained on the last line by noticing that $j-k+1=m \implies j-k=m-1$.

\end{proof}

\newpage

\begin{lemma}\label{boxrelationbasecase}
As a consequence of the bubble (\ref{ckmb1}, \ref{ckmb2}), the $6j$ (\ref{6jckm1}), braid (\ref{braiddecomp}) and the Reidemeister (\ref{reidckm1}) relations, we obtain:\\
\begin{center}
    \def\svgwidth{11cm}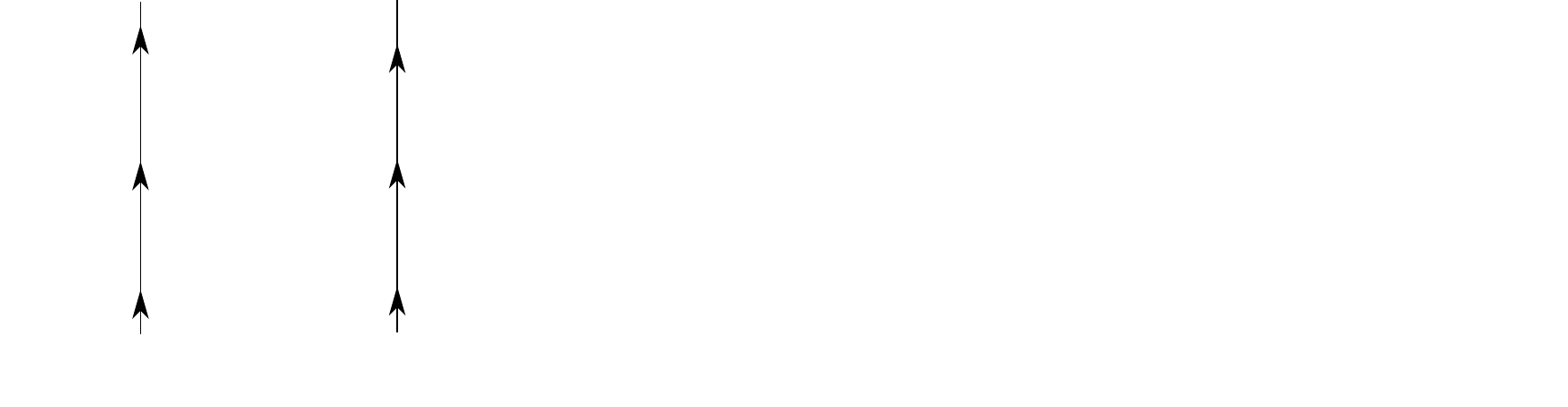
\end{center}
\end{lemma}

\begin{proof}
As before, start from the box on the RHS to get \\
\includegraphics[scale=.7]{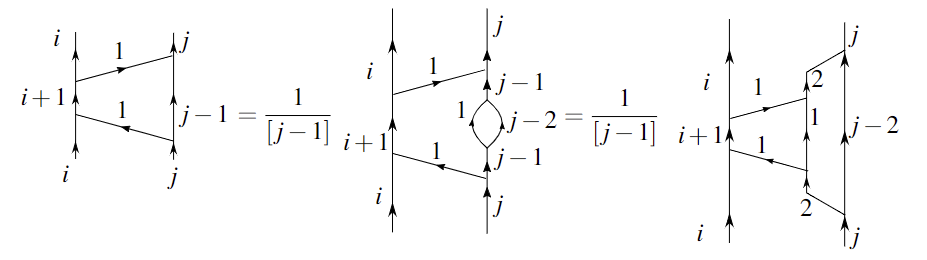}\\
\hspace*{3.45cm}\includegraphics[scale=.7]{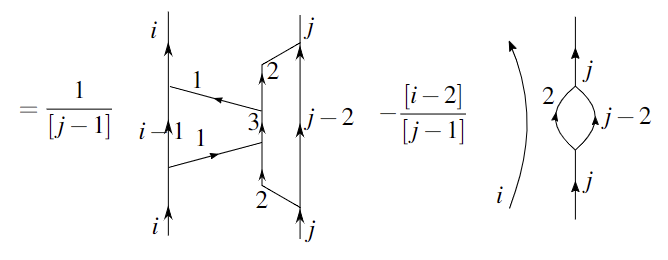}\\
\hspace*{3.45cm}\includegraphics[scale=.7]{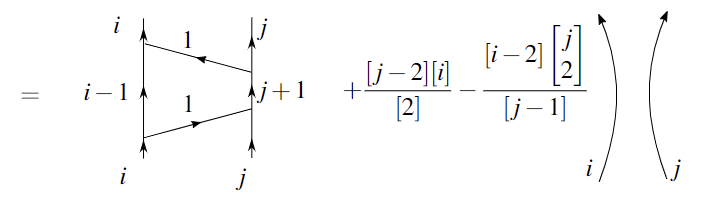}\\
\hspace*{3.45cm}\includegraphics[scale=.6]{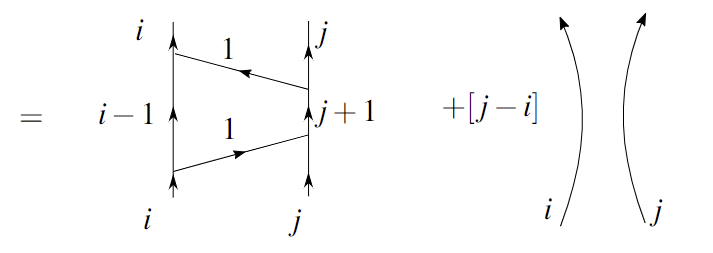}\\

Above, we use Lemma \ref{recursion1} on the third equality and Lemma \ref{generalizedrecursion1} to obtain the fourth equality.

\end{proof}

\newpage

\begin{theorem}\label{allckmrelationtheorem1}
The list of relations in $\mathcal{S}p(SL_n)$ (c.f. Section \ref{ckmcategory1}), not including the tag relations (\ref{tagswitchckm}, \ref{movetag1ckm}, and \ref{movetagckm}), follow from the Reidemeister invariance, the bubble (\ref{ckmb1} and \ref{ckmb2}), braid (\ref{braiddecomp}) and the $6j$ relations (\ref{6jckm1}).
\end{theorem}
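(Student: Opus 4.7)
The only relation in the list (excluding the tag relations) that is not already part of the theorem's hypothesis set is the general box relation (\ref{boxrelationsckm}); the braid decomposition (\ref{braiddecomp}) is being treated throughout the preceding lemmas as a definitional expansion of crossings into webs. So the core content of the theorem is to derive the full family of box relations, and my plan is to do this by induction, leveraging the three lemmas already established in this section.

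The induction variable is the minimum of the four external labels of the square web. The base case, in which some external label equals $1$, is exactly Lemma \ref{boxrelationbasecase}. For the inductive step, given a box whose external labels are all at least $2$, I would apply the $6j$ relation (\ref{6jckm1}) once to split a thick external edge carrying label $s$ into a pair of parallel strands of labels $1$ and $s-1$. The $1$-labeled strand then abuts a smaller square that either falls under the base case (Lemma \ref{boxrelationbasecase}) or has strictly smaller minimum external label and is covered by the inductive hypothesis. Substituting the resulting box expansion and then reassembling the split strands via a second $6j$ move, with the leftover bubbles absorbed by (\ref{ckmb1}) and (\ref{ckmb2}), should yield the target relation for the original box.

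The main obstacle will be matching coefficients. Each application of the $6j$ relation introduces a sum indexed by an interior label with a quantum binomial weight, and each bubble collapse multiplies the surviving terms by further quantum binomials and quantum integers. Collapsing the resulting double sums into the single sum appearing in (\ref{boxrelationsckm}) will require a $q$-Pascal identity together with a $q$-Vandermonde-type summation, which is the higher-rank analogue of the elementary identity $(-q-q^{-1})[j]+[j+1] = [j-1]$ that closes Lemma \ref{recursion1}. The saving grace is that Lemmas \ref{recursion1} and \ref{generalizedrecursion1} have already packaged the key binomial manipulation diagrammatically, so the inductive step should reduce to invoking these lemmas in the correct position inside the square rather than redoing binomial bookkeeping from scratch. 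Once the coefficients on both sides of the resulting equation are shown to agree in this way, the induction closes and every box relation of (\ref{boxrelationsckm}) is realized as a consequence of Reidemeister invariance, the bubble relations, and the $6j$ relation.
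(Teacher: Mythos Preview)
The paper's own proof is a one-liner: it invokes a remark in Section~2.2 of \cite{ckm} asserting that the special box relation with both rungs labeled $1$ already implies the full family (\ref{boxrelationsckm}); since Lemma~\ref{boxrelationbasecase} derives exactly that rung-$1$ case from Reidemeister, bubble, and $6j$, the theorem follows immediately.

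Your attempt to make the induction explicit rather than defer to \cite{ckm} is reasonable in spirit, but the induction is mis-set. Lemma~\ref{boxrelationbasecase} is \emph{not} the case where some external label equals $1$: the external labels there are arbitrary $i$ and $j$, and it is the two \emph{rungs} (the horizontal internal edges of the square) that carry the label $1$. The induction variable implicit in the \cite{ckm} remark is therefore the rung label, not the minimum external label. Splitting an external edge of label $s$ into $1$ and $s-1$ via a $6j$ move, as you propose, does not produce a smaller box of the same shape with a $1$-strand abutting it; it produces a pentagonal web to which neither Lemma~\ref{boxrelationbasecase} nor your inductive hypothesis applies directly. The workable explicit argument instead views a box with rung $s$ as $s$ vertically stacked boxes each with rung $1$ (using (\ref{ckmb1}) and (\ref{6jckm1}) to split and re-fuse the rung), applies Lemma~\ref{boxrelationbasecase} layer by layer, and then collapses; the coefficient bookkeeping is then the $q$-Vandermonde identity you anticipated, but organized along the rung direction rather than the external-label direction.
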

\begin{proof}
Following the remark in Section 2.2 in \cite{ckm}, Lemma \ref{boxrelationbasecase} gives us all the box relations. Since Lemma \ref{boxrelationbasecase} follows from assuming the Reidemeister II invariance, the bubble and the $6j$ relations, this gives us the result.
\end{proof}




\section{The full subcategory based on the standard representation}\label{fullsubcat}
Throughout this section, whenever $q$ is a root of unity, it is assumed that the quantum integers $[1],\ldots,[n]$ are invertible. In this section, our goal is to show that the relations in the full subcategory of $\mathcal{MOY}(SL_n)$ with objects sequences in $\{1^\pm\}$ is completely characterized by the specialized HOMFLYPT relation discussed below.
Let $a, v, s$ and $s-s^{-1}$ be invertible elements in our integral domain $R$.
\begin{figure}[H]
    \centering
    \includegraphics[scale=.6]{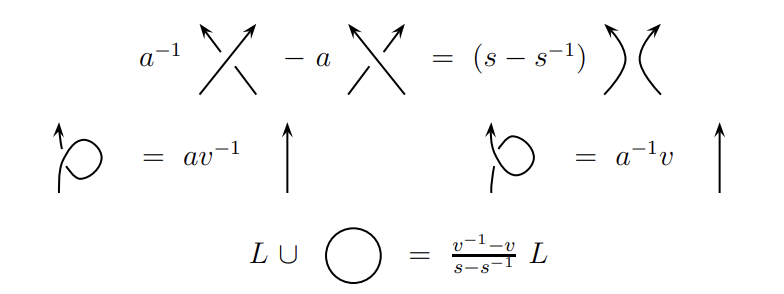}
    \caption{the HOMFLYPT relations}
    \label{blanchethomfly}
\end{figure}
The specialized HOMFLYPT relation is obtained by picking the objects to be sequences in $\{1^{\pm}\}$ and by setting $a=q^{-1/n}$, $s=q$ and $v = q^{-n}$.


\subsection{Categories \(\mathcal{MOY}^i(SL_n)\)} Let $\mathcal{MOY}^i(SL_n)$ denote a quotient category which is a subset of the full subcategory of \(\mathcal{MOY}(SL_n)\) with objects sequences in $\{1^\pm\}$ and morphisms generated by trivalent vertices and crossings that do not contain any edge with a color `$x$' such that $x>i$. 

Let $\mathcal{R}el^i(SL_n)$ be the set of relations in $\mathcal{MOY}^i(SL_n)$. Then $\mathcal{R}el^i(SL_n)$ consists of all the relations in the full subcategory of $\mathcal{MOY}(SL_n)$ that involve the colors less or equal to `$i$'. 
Regarding the braid relations (\ref{braiddecomp}) consisting of crossings with colors `$j$',`$k$' such that and $j+k=i+1$, $\mathcal{R}el^i(SL_n)$ consists of HOMFLYPT-type relations involving the difference of over and under-crossings which equal the sum of diagrams with colors lower than $j+k$. For example, in $\mathcal{R}el^1(SL_n)$, this gives us the specialized HOMFLYPT relation involving the colors `$1$' and in $\mathcal{R}el^2(SL_n)$, this gives a relation involving the difference of crossings between colors `$1$' and `$2$' and so on. For the case of crossings with colors $`j',`k'$ such that $j+k>i+1$, we note that all our diagrams live in the full subcategory which allows for the isotopy relation in $\mathcal{R}el^i(SL_n)$ to drag the $j$ (or the $k$) color across a vertex to obtain crossing relations (possibly more than one) in terms of lower colors.
This way, we write down an equivalent braid relation at each level without involving any higher colors in the summand.


Since $\mathcal{R}el^{i-1}(SL_n) \subseteq \mathcal{R}el^i(SL_n)$, there is a chain of categories defined by restricting the colors given by natural (inclusion) functors $\psi_i$ as follows:
\begin{align*}
    \mathcal{MOY}^1(SL_n) \xhookrightarrow{\psi_1} \mathcal{MOY}^2(SL_n) \xhookrightarrow{\psi_2} \mathcal{MOY}^3(SL_n) \xhookrightarrow{} \cdots \xhookrightarrow{\psi_{i-1}} \mathcal{MOY}^i(SL_n)
\end{align*}
\\
Below, we construct a spherical braided functor, $\phi_i$ from $\mathcal{MOY}^i(SL_n)$ to $\mathcal{MOY}^{i-1}(SL_n)$ explicitly.

\subsection{Constructing $\phi_i$} In order to define the map $\phi_i: \mathcal{MOY}^i(SL_n) \to \mathcal{MOY}^{i-1}(SL_n)$, we start with the following observation which follows from Lemma \ref{generalizedrecursion1}.\\
\begin{equation}\label{i.2a}
 \def\svgwidth{14cm}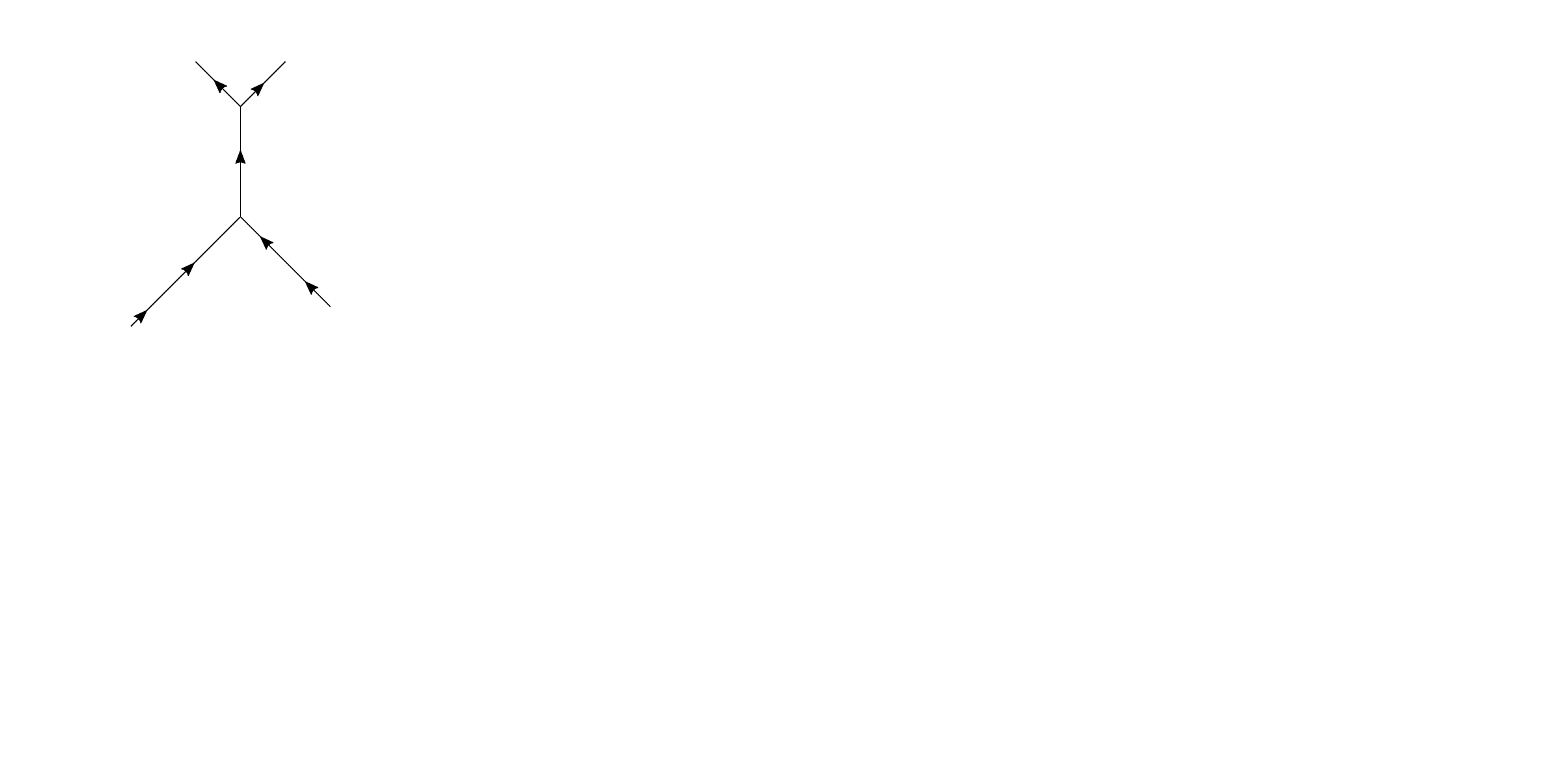
\end{equation}

where, going from the first to second equation, we use the $6j$ move. One can use the idea above to obtain a relation for a generic diagram with the color `$i$' in the middle. 

Thus, define the functor $\phi_i: \mathcal{MOY}^i(SL_n) \to \mathcal{MOY}^{i-1}(SL_n)$ to be identity on $\mathcal{MOY}^{i-1}(SL_n) \subseteq \mathcal{MOY}^i(SL_n)$ and on $\mathcal{MOY}^i(SL_n) \setminus \mathcal{MOY}^{i-1}(SL_n)$, define the image of a morphism with a color `$i$' in its edge as

\begin{equation}\label{i.2}
\def\svgwidth{14.65cm}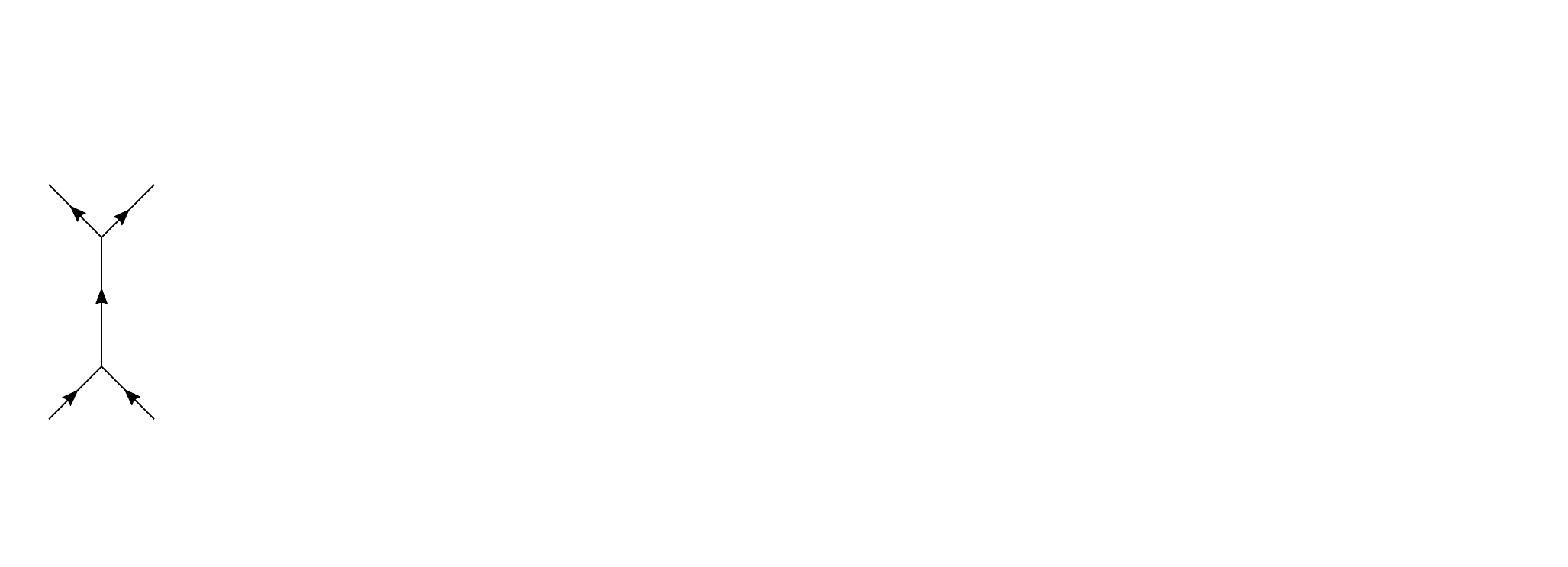
\end{equation}
Note that using our observation in (\ref{i.2a}), one obtains a relation with LHS and RHS corresponding to the diagrams on (\ref{i.2}) which was the reason for defining $\phi_i$ in this manner in (\ref{i.2}) above. Below, we prove that the spherical braided functor $\phi_i$ gives a categorical equivalence. We do this in two steps in order to provide a cleaner exposition. First we consider the case $i\geq 3$, which involves working with trivalent vertices and after that we consider the case of $i=2$.






\newpage

\begin{lemma}\label{faithfulfunctorphi}
For $i>3$, the functor $\phi_i:\mathcal{MOY}^i(SL_n) \to \mathcal{MOY}^{i-1}(SL_n)$ is fully faithful.
\end{lemma}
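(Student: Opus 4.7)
The plan is to exhibit $\phi_i$ as a rewriting procedure that preserves the underlying morphism inside $\mathcal{MOY}(SL_n)$, and to deduce fully-faithfulness from this preservation. Since $\mathcal{MOY}^i(SL_n)$ and $\mathcal{MOY}^{i-1}(SL_n)$ are both subcategories of $\mathcal{MOY}(SL_n)$ on the same objects with Hom-spaces that sit inside those of $\mathcal{MOY}(SL_n)$, a replacement that equals the identity in $\mathcal{MOY}(SL_n)$ will automatically be faithful.

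First, I would upgrade the local identity derived in (\ref{i.2a}) to cover every occurrence of the color $i$. The LHS of (\ref{i.2}) depicts a generic local neighborhood of a color-$i$ edge: two trivalent vertices joined by an $i$-colored strand, with boundary colors $k,i-k$ above and $j,i-j$ below. Using Lemma \ref{generalizedrecursion1} together with the bubble and $6j$ relations (exactly as in the calculation leading up to (\ref{i.2})), this local piece equals the RHS of (\ref{i.2}) as a morphism in $\mathcal{MOY}(SL_n)$; the hypothesis $i \geq 3$ guarantees that the RHS uses only colors at most $i-1$ (in particular the appearance of $2$ on the RHS is legal). Up--down reflection and arrow reversal extend the identity to the remaining configurations in which $i$ can appear between trivalent vertices.

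Next, I would define $\phi_i$ on generators of $\mathcal{MOY}^i(SL_n)$ to be the identity on generators of color at most $i-1$, and to apply the substitution (\ref{i.2}) at every color-$i$ edge of a web, extended by the usual bilinear and functorial rules. The crucial point, and the main obstacle, is the well-definedness of this procedure: the output in $\mathcal{MOY}^{i-1}(SL_n)$ must be independent of the order in which color-$i$ edges are processed, and it must respect every relation of $\mathcal{MOY}^i(SL_n)$. The key observation is that, by the preceding step, each local replacement is a derived identity in $\mathcal{MOY}(SL_n)$. Consequently, viewed as an element of $\Hom_{\mathcal{MOY}}(A,B)$, $\phi_i(f)$ equals $f$ independently of choices; and because $\Hom_{\mathcal{MOY}^{i-1}}(A,B)$ embeds in $\Hom_{\mathcal{MOY}}(A,B)$, this equality also holds in $\mathcal{MOY}^{i-1}(SL_n)$.

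With $\phi_i$ well defined, the fully-faithful property is formal. Fullness: every morphism in $\mathcal{MOY}^{i-1}(SL_n)$ lies in the image of $\phi_i$, since $\phi_i$ restricts to the identity on this subcategory. Faithfulness: if $\phi_i(f)=\phi_i(g)$ in $\mathcal{MOY}^{i-1}(SL_n)$, then $f = \phi_i(f) = \phi_i(g) = g$ as morphisms in $\mathcal{MOY}(SL_n)$, and hence in $\mathcal{MOY}^i(SL_n)$. The expected bottleneck is therefore concentrated in Step two: verifying that each local use of (\ref{i.2}) is a derived identity in $\mathcal{MOY}(SL_n)$, and combining these local identities coherently in the presence of multiple color-$i$ edges.
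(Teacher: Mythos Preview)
Your argument has a genuine circularity. You assume that $\Hom_{\mathcal{MOY}^{i-1}}(A,B)$ embeds into $\Hom_{\mathcal{MOY}}(A,B)$ and use this both for well-definedness and for faithfulness. But the categories $\mathcal{MOY}^j(SL_n)$ are presented as \emph{quotients} (free spider on generators with colors $\le j$, modulo the relations $\mathcal{R}el^j$ that only involve those colors), and the natural functor $\mathcal{MOY}^{i-1}(SL_n)\to\mathcal{MOY}(SL_n)$ is not known a priori to be injective on morphism spaces. In fact, that injectivity is exactly what the chain of lemmas $\phi_n,\phi_{n-1},\dots$ is establishing; assuming it up front begs the question. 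Concretely: from ``$\phi_i(f)=f$ in $\mathcal{MOY}$'' you can only conclude ``$\phi_i(f)=\phi_i(g)$ in $\mathcal{MOY}$'', and passing from that to equality in $\mathcal{MOY}^{i-1}$ is the step you have not justified.

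The paper closes this gap by doing the work you tried to avoid: it checks, relation by relation, that $\phi_i$ carries each generating relation in $\mathcal{R}el^i\setminus\mathcal{R}el^{i-1}$ (the $6j$ relation, the two bubble relations, and the Reidemeister moves involving the color $i$) to a consequence of $\mathcal{R}el^{i-1}$. That is precisely the content that makes $\phi_i$ descend to a well-defined functor between the quotient categories; once that is in hand, fullness is immediate from $\phi_i\circ\iota=\mathrm{id}$, and faithfulness from the fact that the rewriting (\ref{i.2a}) is itself derivable using only relations with colors $\le i$, so that $\iota\circ\phi_i=\mathrm{id}$ already in $\mathcal{MOY}^i$. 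Your observation that (\ref{i.2}) is an identity in the ambient category is correct and useful for the faithfulness half (if you restate it inside $\mathcal{MOY}^i$ rather than $\mathcal{MOY}$), but it does not substitute for the relation-by-relation verification needed for well-definedness.
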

\begin{proof}
As $\phi_i$ is a map between quotient categories, its well-definedness and faithfulness is established by showing that all the relations $\phi_i(\mathcal{R}el^i(SL_n)) \subseteq \mathcal{R}el^{i-1}(SL_n)$.To that end, recalling from Theorem \ref{allckmrelationtheorem1}, it is enough to check the braid and two Reidemeister relations, the bubble and the $6j$ relations involving the color `$i$' and verify that their images under $\phi_i$ is contained in $\mathcal{R}el^{i-1}(SL_n)$.

\noindent\textit{The $6j$ relation}: In order to show that $\phi_i$ respects the $6j$ relation involving the color `$i$', we need to first show the following equality holds in $\mathcal{MOY}^{i-1}(SL_n)$.

\underline{Claim:}\\
\includegraphics[scale=.55]{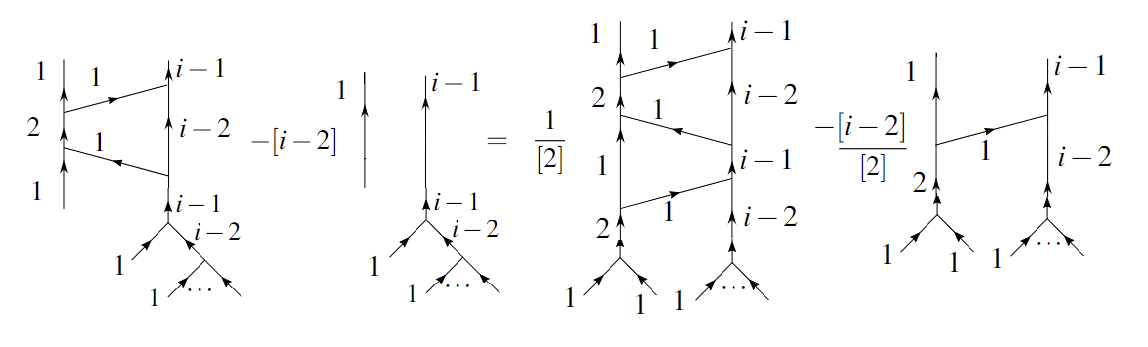}

\begin{proof}[Proof of the claim]
Starting with the box on the LHS, and applying the relation from Lemma \ref{generalizedrecursion1} on the bottom `$i-1$' color gives us\\
\includegraphics[scale=.7]{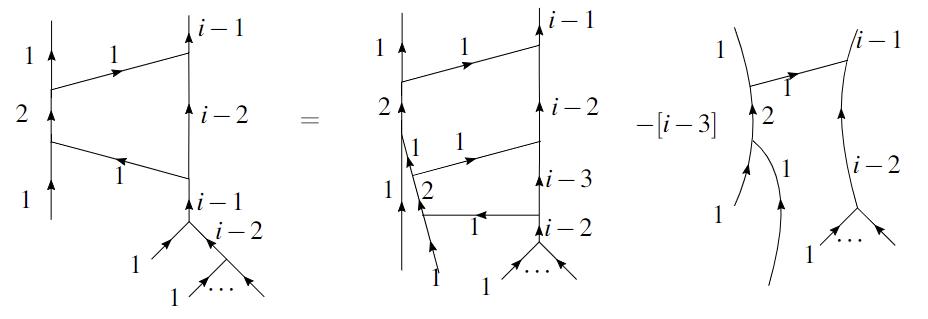}\\
\hspace*{4cm}\includegraphics[scale=.7]{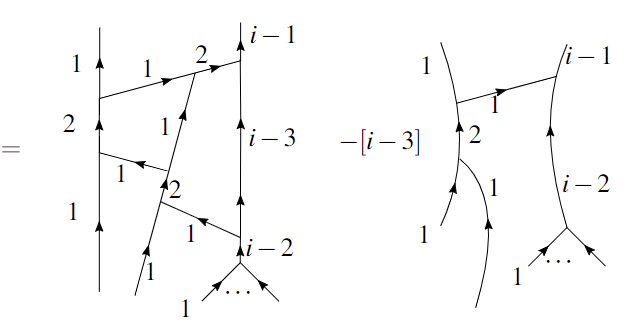}

Now, applying the Lemma \ref{generalizedrecursion1} on the leftmost box above gives us 

\includegraphics[scale=.7]{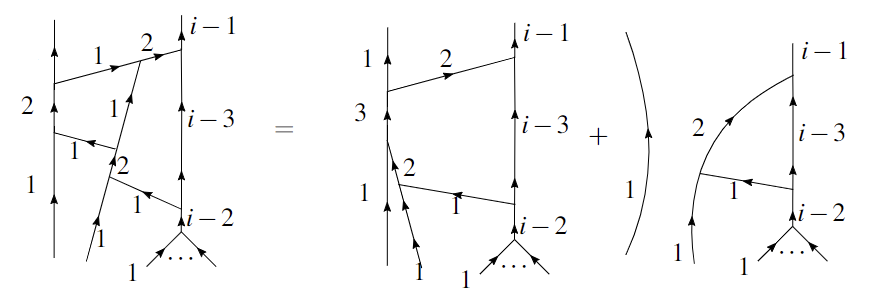}\\

\hspace*{4cm}\includegraphics[scale=.7]{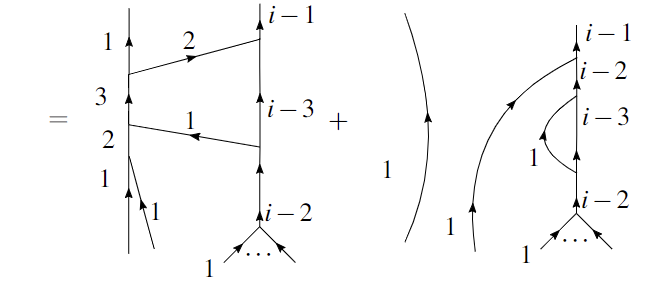}\\

\hspace*{4cm}\includegraphics[scale=.7]{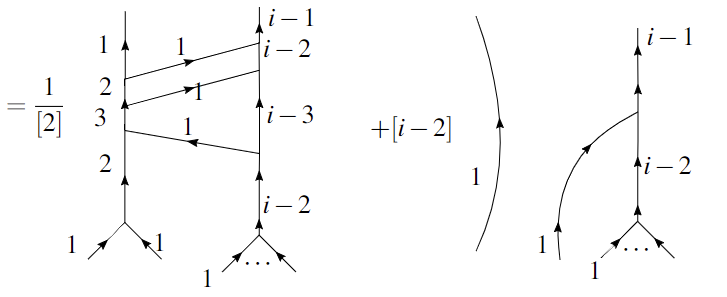}\\
Finally, the claim follows from applying the relation from Lemma \ref{boxrelationbasecase} to the lower box on the left diagram above and combining all the terms that we started with on the LHS. 
\end{proof}
\noindent Note that all the other $6j$ relations involving the color `$i$' follow by applying induction using the method we showed above.

\noindent\textit{Braid and Reidemeister relations}: The braid relations with an `$i$' colored strand in the crossings get mapped to a sum of diagrams with strictly lower colors under $\phi_i$, giving crossings with lower colors as shown in the diagram below. Whence, one can check that these relations are satisfied using braid relations in $\mathcal{R}el^{i-1}(SL_n)$ after applying a sequence of Reidemeister moves to simplify the resolutions to get lower colors in the crossings (as discussed below) when necessary.     

Consider the Reidemeister II relation involving color `$i$' in $\mathcal{R}el^i(SL_n)$. In order to see that its image under $\phi_i$ is contained in $\mathcal{R}el^{i-1}(SL_n)$ we proceed as shown below. Note that by $D_\alpha$, we mean the linear combination of trivalent oriented graphs obtained as the image of an edge with color `$i$' under $\phi_i$ shown in (\ref{i.2}) and $\gamma_\alpha$ are the corresponding coefficients. As each edge in $D_\alpha$ has label strictly less than `$i$', using a sequence of Reidemeister II's and III's coming from $\mathcal{R}el^{i-1}(SL_n)$, one obtains the result. 
\begin{center}
\includegraphics[scale=.65]{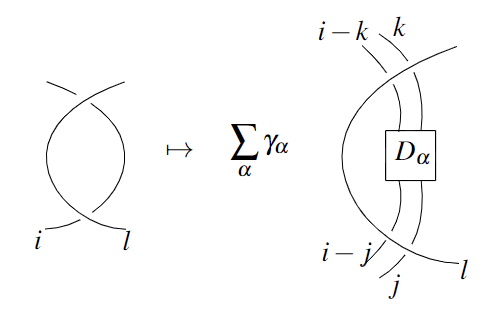}
\end{center}

Similarly, now consider the Reidemeister III relation involving the color `$i$'. In this case, one obtains the result by using the Kauffman Trick to observe that its image under $\phi_i$ is contained in $\mathcal{R}el^{i-1}(SL_n)$ by noticing that each edge in $D_\alpha$ has label strictly less than `$i$' as shown below:

\begin{center}
\includegraphics[scale=.55]{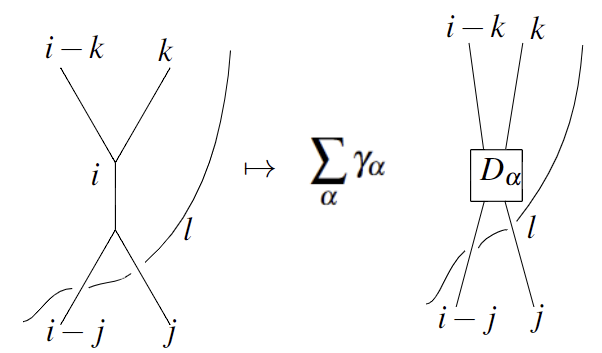}
\end{center}


\noindent\textit{The bubble relation I:}
Take the general relation \ref{i.2} and stack them to get:\\
\begin{align}\label{crazybubble1}
  \def\svgwidth{18.5cm}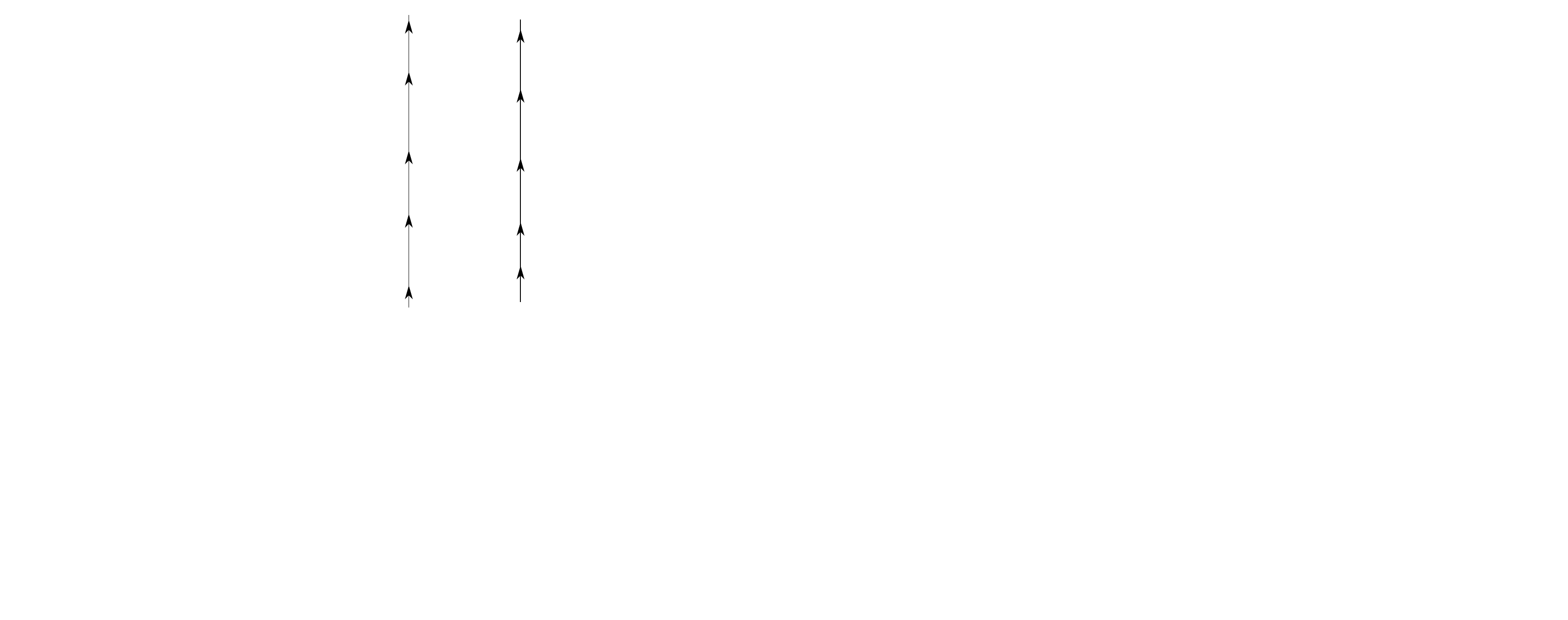
\end{align}
\\

where, $A=\dfrac{1}{[j][k]}$ and $B = \dfrac{[i-2]}{[j][k]}$.\\

We analyze the first box from the RHS above. Start by applying the relation from Lemma (\ref{recursion1}) to the left edge colored `$j$' in the following manner.\\

\includegraphics[scale=.6]{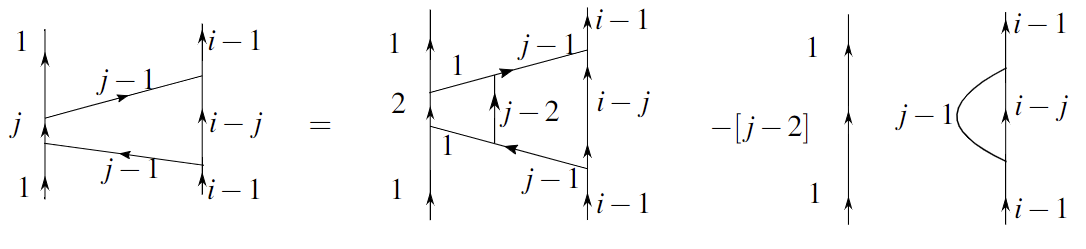}\\
\hspace*{4cm}\includegraphics[scale=.6]{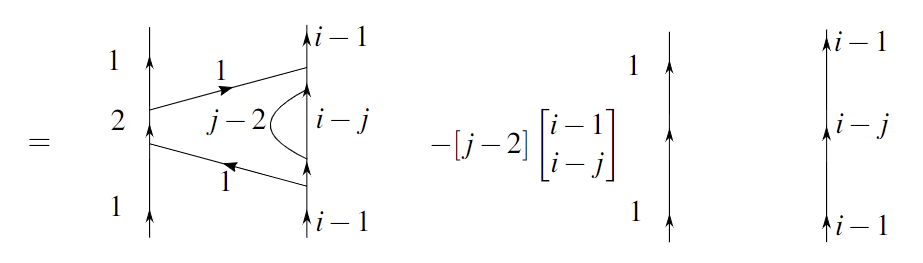}

Now going back to (\ref{crazybubble1}), apply the relation corresponding to (\ref{i.2})  again for the labels `$i-1$' appearing on the two (right) middle edges of the first diagram on RHS. Under applying $6j$ moves and the box relation (\ref{boxrelationsckm}), we get the following: \\

\def\svgwidth{14cm}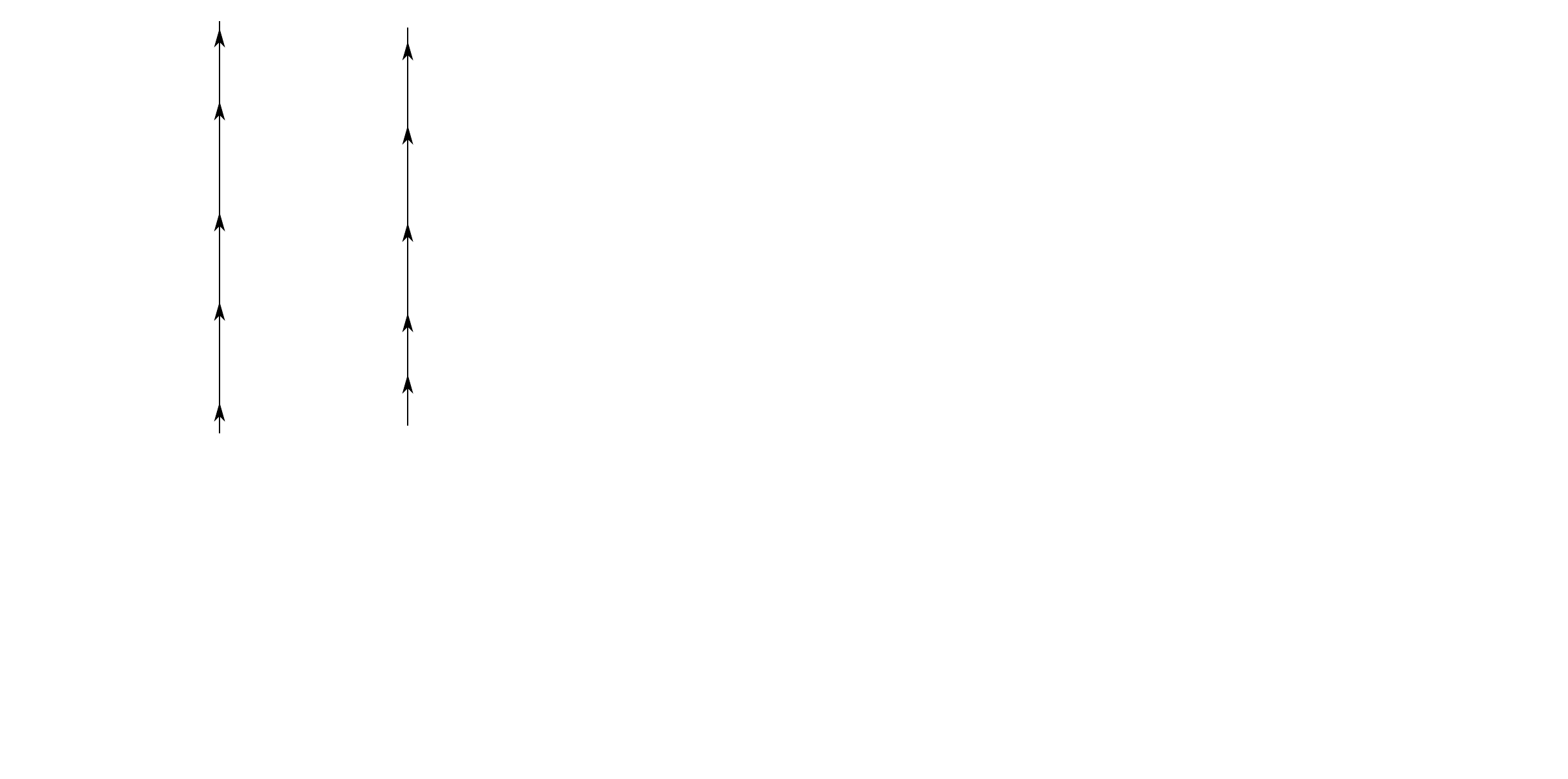\\

where, $A'=A^2 [2]\begin{bmatrix} i-2 \\ i-j\end{bmatrix}$, \text{ }$A'' = A^2 \left(\begin{bmatrix} i-2 \\ i-3\end{bmatrix} -[j-2] \begin{bmatrix} i-1 \\ i-j\end{bmatrix}-[2][i-3]\right)[3-i]$ and $B' = A'[3-i]+A''$.
\\

Now, apply the relation corresponding to (\ref{i.2}) on the labels `$3$' on the left edges of the boxes, starting with the lower one first. This followed by applying the box relation \ref{boxrelationsckm} on the newly obtained interior box finally gives us the desired outcome once we note the following:







\def\svgwidth{14.57cm}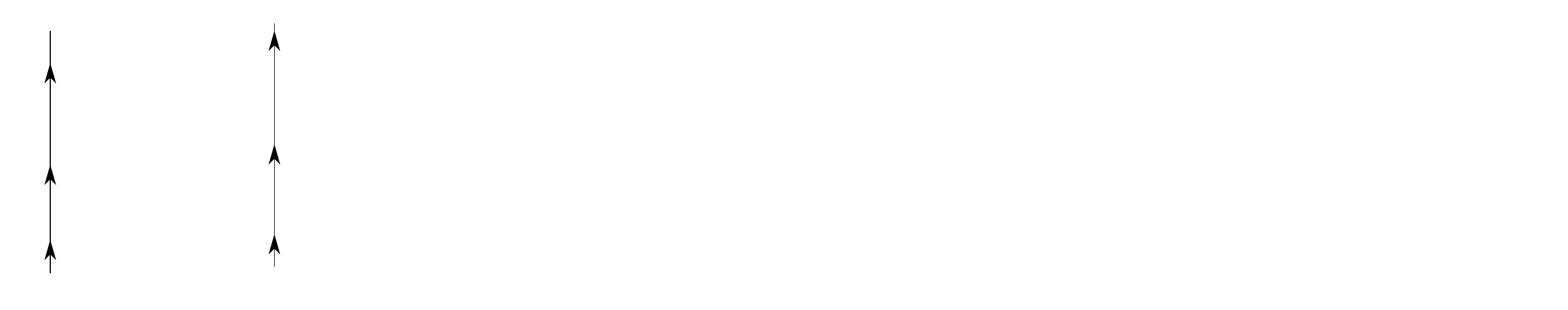\\

Notice that we picked the most complicated diagram out of the four diagrams in the original expansion, but the other three diagrams on RHS are very similar (and simpler) to the diagram we worked out. Thus, the steps that led us to simplify the first diagram works for the rest of the diagrams as well. Finally, the result follows by combining the coefficients from the like terms. 

\textit{The bubble relation II:} Use relation (\ref{boxrelationsckm}) again to get:
\begin{center}
    \includegraphics[scale=.6]{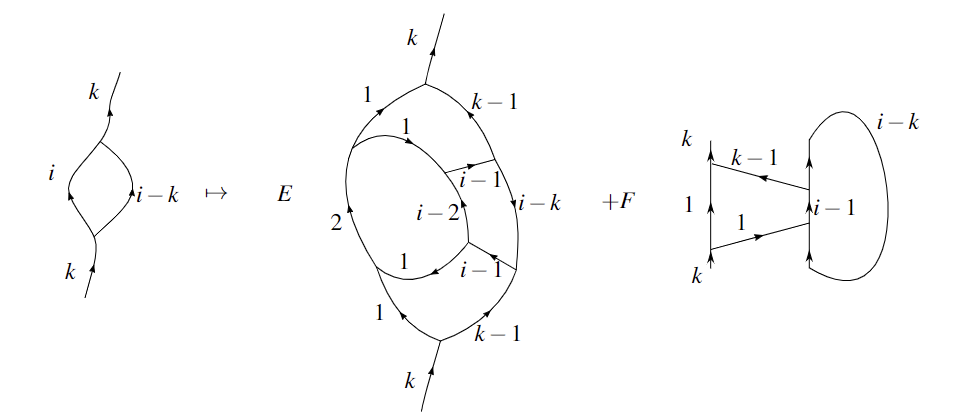}\\
\end{center}
where $E= \dfrac{1}{[k]^2}$ and $F=-\dfrac{[i-2]}{[k]^2}$.
Finally, the result is obtained by applying the box relation below.\\
\includegraphics[scale=.7]{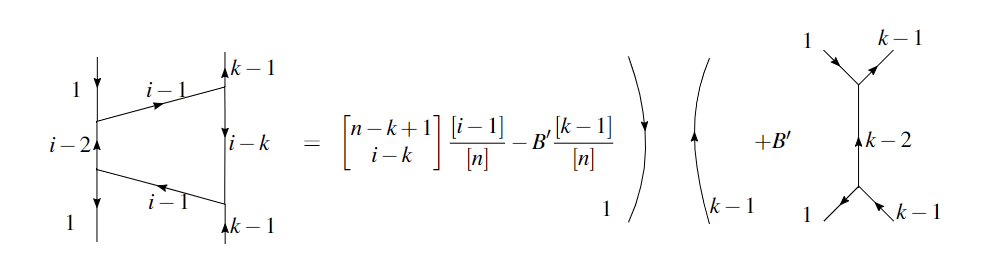}
\\
where $B' = \begin{bmatrix} n-k \\ i-k\end{bmatrix}$.
Note that the above coefficients can be computed using locality and the following relation:
\begin{center}
    \includegraphics[scale=.65]{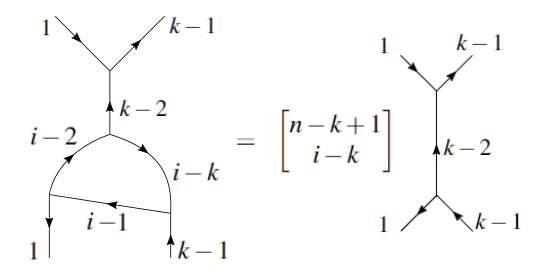}
\end{center}

Finally, surjectivity of $\phi_i$ is obtained by recalling the definition of $\phi_i$ and noting that $\mathcal{MOY}^{i-1}(SL_n) \subseteq \mathcal{MOY}^i(SL_n)$. 
\end{proof}

\begin{theorem}\label{reducecolorthm} 
For $n\geq i\geq 2$, $\mathcal{MOY}^i(SL_n)$ and $\mathcal{MOY}^{i-1}(SL_n)$ are equivalent as spherical braided (ribbon) categories.
\end{theorem}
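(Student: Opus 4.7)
The plan is to deduce this theorem as an almost immediate corollary of Lemma~\ref{faithfulfunctorphi}. The key observation is that the categories $\mathcal{MOY}^i(SL_n)$ and $\mathcal{MOY}^{i-1}(SL_n)$ have the same object set, namely finite sequences in $\{1^\pm\}$, and the functor $\phi_i$ constructed in the previous subsection is the identity on objects. Hence $\phi_i$ is (tautologically) essentially surjective; combined with full faithfulness supplied by Lemma~\ref{faithfulfunctorphi}, this already yields an equivalence of ordinary categories for $i \geq 3$.

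To upgrade this to an equivalence of braided spherical categories, I would check three structural compatibilities. First, $\phi_i$ is monoidal: since the tensor product is horizontal concatenation and $\phi_i$ is defined locally by replacing every color-$i$ edge according to (\ref{i.2}) and leaving everything else fixed, the substitution commutes with disjoint union of diagrams. Second, $\phi_i$ preserves the braiding: braid isomorphisms in $\mathcal{MOY}^i(SL_n)$ involving color $i$ are, by (\ref{braiddecomp}), linear combinations of webs containing color-$i$ edges, and applying $\phi_i$ to these webs yields the corresponding braiding in $\mathcal{MOY}^{i-1}(SL_n)$; Reidemeister compatibility under this assignment is part of Lemma~\ref{faithfulfunctorphi}. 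Third, the pivotal and spherical structures are preserved: all cups and caps occurring in either category already live in $\mathcal{MOY}^1(SL_n) \subseteq \mathcal{MOY}^{i-1}(SL_n)$, where $\phi_i$ is the identity, so quantum traces and duality data are manifestly respected.

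The only subtlety is the boundary case $i = 2$, since Lemma~\ref{faithfulfunctorphi} is stated for $i \geq 3$. Here I would verify directly that the analogous substitution $\phi_2$, sending a color-$2$ edge to the linear combination of color-$1$ webs dictated by (\ref{i.2}) with $j=k=1$, is well defined and fully faithful: the relevant relations in $\mathcal{MOY}^2(SL_n) \setminus \mathcal{MOY}^1(SL_n)$ reduce to the bubble and $6j$ relations at color $2$ together with Reidemeister invariance, each of which can be checked by a short direct computation analogous to (but strictly simpler than) the arguments in Lemma~\ref{faithfulfunctorphi}.

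The main obstacle is therefore not contained in this theorem itself but in the preceding Lemma~\ref{faithfulfunctorphi}, where the diagrammatic verifications establishing well-definedness of $\phi_i$ on all relations were carried out. Once that lemma is in hand, the present theorem is essentially a formal consequence, reducing to the statement that a fully faithful functor which is the identity on objects is an equivalence, together with the observation that the extra braided/spherical structure is transported automatically because $\phi_i$ acts as the identity on the common subcategory $\mathcal{MOY}^{i-1}(SL_n)$.
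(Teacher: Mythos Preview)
Your approach for $i \geq 3$ matches the paper's exactly: both invoke Lemma~\ref{faithfulfunctorphi} and observe that $\phi_i$ is the identity on objects. Your added remarks about monoidal, braided, and spherical compatibility are correct and more explicit than what the paper writes, but they amount to the same content.

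The one place where your proposal diverges from the paper, and where there is a small genuine gap, is the case $i=2$. You propose to use the substitution dictated by~(\ref{i.2}) with $j=k=1$, but that formula degenerates here: the target diagrams in~(\ref{i.2}) carry edges labeled $i-2=0$ and the derivation in~(\ref{i.2a}) (and the subsequent verifications in Lemma~\ref{faithfulfunctorphi}) pass through colors such as $i-3$, which are not available when $i=2$. More fundamentally, $\mathcal{MOY}^1(SL_n)$ contains no nontrivial trivalent vertices at all (there is no admissible vertex with all legs labeled~$1$), so a ``color-$1$ web'' replacement of the shape~(\ref{i.2}) cannot live there. The paper handles $i=2$ differently and more directly: it uses the braid decomposition~(\ref{braiddecomp}) for two strands labeled~$1$ to solve for the color-$2$ $I$-web as a linear combination of a crossing and two parallel strands, both of which are already morphisms in $\mathcal{MOY}^1(SL_n)$. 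Once you replace your proposed $\phi_2$ by this crossing-based substitution, the rest of your argument goes through unchanged.
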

\begin{proof}
For the case of $i=3$, the $6j$ relation in lower labels (`$1$' and `$2$') is obtained as in the proof of Lemma \ref{faithfulfunctorphi} which is equivalent to the relation shown below. Note that this relation is also a consequence of the Reidemeister III invariance (after resolving the crossings) with labels $`1$'.
\begin{center}
    \def\svgwidth{12cm}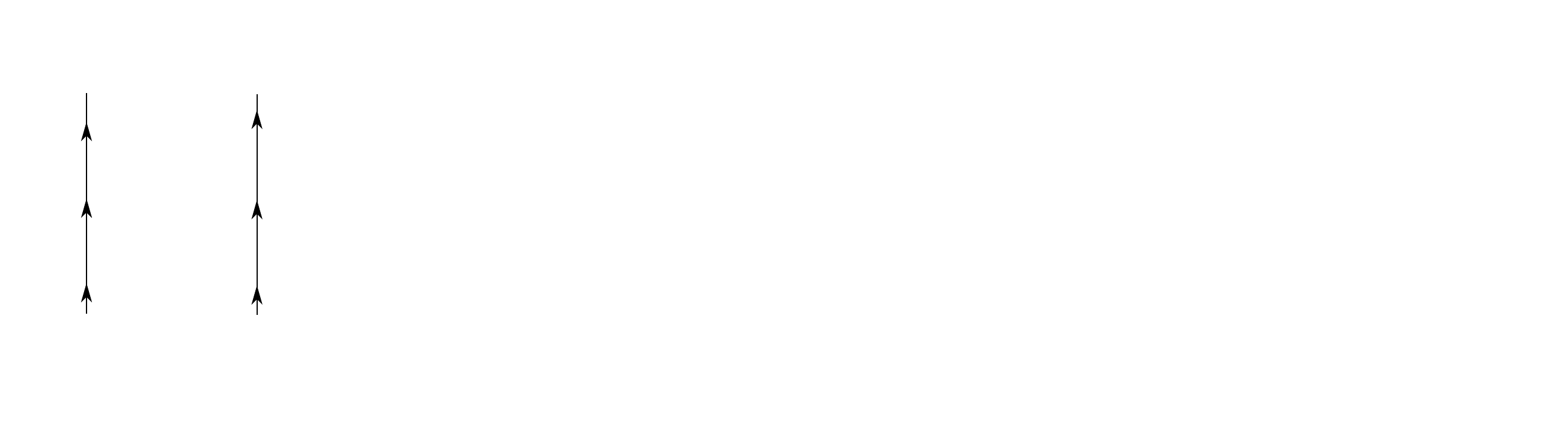
\end{center}

Similarly, the bubble relations in lower labels ($`1$' and $`2$') are obtained again as in the proof of Lemma \ref{faithfulfunctorphi} where one starts by replacing any graph containing an edge with label `$3$' in terms of lower labels (as demonstrated in L.H.S. above) and obtain an equivalent relation in terms of lower labels.

The case of $i=2$ also follows accordingly by noting that any edge with label `2' in $\mathcal{MOY}^{2}(SL_n)$ can be written as a linear combination of diagrams (involving a crossing) with labels `1'. 

\begin{align}
 \includegraphics[scale=.6]{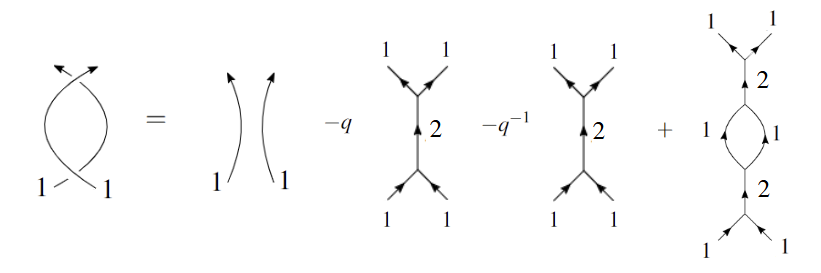}   
\end{align}
Note that in $\mathcal{MOY}^{2}(SL_n)$, each edge with label `$2$' appears in between label `$1$'s. This then gives us that the two Reidemeister II relations (involving labels `$1$'), one where the two strands are oriented the same way and the other where they are oriented the opposite way, imply the two bubble relations (\ref{ckmb1}, \ref{ckmb2}) respectively. An instance of this is shown above.
Thus, the result now follows from Lemma \ref{faithfulfunctorphi}.
\end{proof}



\section{From the Sikora Category to CKM}\label{siktockmmain}
\noindent Let $\mathcal{S}p^n(SL_n)$ be the full subcategory of the CKM spider category with objects \{$1^\pm$\}. In this section, we define a braided spherical functor: $\tau:\Tilde{\mathcal{S}} \to \mathcal{S}p^n(SL_n)$. Our goal is to show that this functor is fully faithful.
Note that $\mathcal{S}p^n(SL_n)$ contains all the tag relations as the trivalent vertex given by the projection on (and inclusion of) the trivial representation $\Lambda^n \CC^n$ is now one of the generators.  

Also throughout this section, whenever $q$ is a root of unity, it is assumed that the quantum integers $[1],\ldots,[n]$ are invertible.
\newpage
\subsection{On generators} We define $\tau$ as follows on the generators of morphisms:
\begin{center}\label{tauongens}
 \includegraphics[scale=.75]{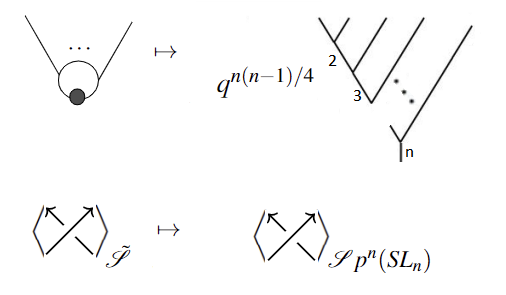}
\end{center}
Note, the assignment of the $n$-vertex from $\Tilde{\mathcal{S}}$ to the chosen (fusion) tree diagram above is unique up to $6j$ moves. Further, the tree diagram on the image is normalized so that it is $q^{n(n-1)/4}$ times the respective tree diagram in $\mathcal{S}p^n(SL_n)$.
Also note that the label `$n$' here shows the placement of the tag. 

\subsection{On relations}
We start by recalling a result from \cite{bla}. Below, $H_j$ represents the Hecke algebra, i.e. the algebra isomorphic to the quotient of the braid group algebra $k[B_j]$ by the HOMFLYPT relation.

Let $\sigma_i \in H_n, i= 1,\cdots, n-1$ represent the standard generators of the braid group $B_n$ where the strand labeled $i$  crosses over the one labeled $i+1$. Further, let $f_j$ and $g_j$ be the deformation of Young symmetrizer and anti-symmetrizer, respectively, where $f_j$ satisfies the recursive relations (\ref{freln1}) and (\ref{freln2}), and $g_j$ satisfies the relation (\ref{antisym}) below. 

\begin{align}
    [2]f_2 &=s^{-1}\OIdma + a^{-1}\OPxa \label{freln1}\\
    [n+1]f_{n+1}&=-[n-1]f_n\otimes 1_1 + [2][n](f_n\otimes 1_1)(1_{n-1}\otimes f_2)(f_n \otimes 1_1) \label{freln2}\\
     g_j &= \frac{1}{[j]!}s^{j(j-1)/2}\sum_{\pi \in S_j}(-as)^{-l(\pi)}w_\pi\label{antisym}
\end{align}
where $l(\pi)$ is the length of $\pi$ and $w(\pi)$ is the positive braid associated with the permutation $\pi$. \\
Note that, just as $f_j$, $g_j$ can also be uniquely defined using the following recursive formulas \cite{bla}:
\begin{align}
    g_1 &= 1_1 \label{antisymrecur1}\\
g_{j+1} &= 1_1 \otimes g_j - \frac{[2][j]}{[j+1]}(1_1 \otimes g_j)(f_2\otimes 1_{j-1})(1_1 \otimes g_j)\label{antisymrecur2}
\end{align}

\begin{proposition}[\cite{bla}]
If $[j]!$ is invertible in $k$, then there exists a unique idempotent $f_j\in H_j$ such that $\forall_i \sigma_if_j = a s f_j=f_j\sigma_i$, and a unique idempotent $g_j\in H_j$ such that $\forall_i \sigma_i g_j = -as^{-1}g_j=g_j\sigma_i$.
\end{proposition}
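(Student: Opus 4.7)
The proof splits cleanly into a formal uniqueness argument and a constructive existence argument.

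For uniqueness, the key observation is that the HOMFLYPT quadratic satisfied by each $\sigma_i$ factors as $(\sigma_i - as)(\sigma_i + as^{-1}) = 0$, so the only possible scalar eigenvalues of $\sigma_i$ are $as$ and $-as^{-1}$. Hence the assignments $\sigma_i \mapsto as$ and $\sigma_i \mapsto -as^{-1}$ extend to algebra characters $\epsilon_{\pm} \colon H_j \to k$ (the braid relations are trivially preserved by any scalar). If $e \in H_j$ is a two-sided common eigenvector of all the $\sigma_i$ with eigenvalue $as$, then $x e = \epsilon_+(x) e = e x$ for every $x \in H_j$. The idempotent condition $e^2 = e$ with $e \neq 0$ forces $\epsilon_+(e) = 1$. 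For any two such idempotents $e, e'$ one gets $e e' = \epsilon_+(e') e = e$ and simultaneously $e e' = \epsilon_+(e) e' = e'$, so $e = e'$. The uniqueness of $g_j$ is identical using $\epsilon_-$.

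For existence, formula (\ref{antisym}) already writes down a candidate for $g_j$; for $f_j$ I would use the parallel formula (or build it inductively from (\ref{freln1})--(\ref{freln2})). Two properties must be checked on the closed-form expression. Idempotency $g_j^2 = g_j$ expands into a double sum over $S_j \times S_j$ and collapses to $g_j$ after invoking the Poincar\'e identity $\sum_{\pi \in S_j} (as)^{l(\pi)} = (as)^{j(j-1)/2}[j]!$, which is exactly where invertibility of $[j]!$ in $k$ enters. Two-sided absorption $\sigma_i g_j = -(as)^{-1} g_j$ is proved by partitioning $S_j$ into pairs $\{\pi, s_i \pi\}$: in each pair the combination $(-as)^{-l(\pi)} \sigma_i w_\pi + (-as)^{-l(s_i\pi)} \sigma_i w_{s_i\pi}$ collapses, via the quadratic relation together with $l(s_i\pi) = l(\pi) \pm 1$, to $-(as)^{-1}$ times the original pair. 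The argument for $f_j$ is completely symmetric.

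The main obstacle is the bookkeeping in the absorption step: one must separate the ascent case $l(s_i \pi) = l(\pi) + 1$ from the descent case and show that after applying either $\sigma_i w_\pi = w_{s_i \pi}$ or $\sigma_i w_\pi = a(s - s^{-1}) w_\pi + a^2 w_{s_i \pi}$, the resulting rearrangement really does reduce to the claimed scalar multiple of $g_j$. The inductive route via (\ref{freln2}) sidesteps this sum manipulation, but trades it for a delicate verification that the sandwich $(f_j \otimes 1_1)(1_{j-1} \otimes f_2)(f_j \otimes 1_1)$ combined with the $f_j \otimes 1_1$ term is absorbed by $\sigma_j$ on both sides, which is the subtle check in that approach.
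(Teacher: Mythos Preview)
The paper does not prove this proposition at all: it is quoted from \cite{bla} (Blanchet) and used as a black box, so there is no ``paper's own proof'' to compare against. What you have written is a correct and standard proof outline for the Blanchet result.

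Your uniqueness argument is clean and complete. One small wrinkle worth making explicit: from $xe = \epsilon_+(x)e$ applied to $x = e$ you get $e = \epsilon_+(e)e$, which is what forces $\epsilon_+(e) = 1$ for $e \neq 0$ (and also shows that $0$ is the only other idempotent with the absorption property, so the proposition is implicitly about the nonzero one).

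Your existence plan is also fine, but the Poincar\'e identity you wrote is slightly miscopied: the parameters $a$ should cancel before you ever invoke it. Concretely, once absorption $\sigma_i g_j = -as^{-1}g_j$ is established, one has $w_\pi g_j = (-as^{-1})^{l(\pi)} g_j$, and then in the idempotency computation the factor $(-as)^{-l(\pi)}(-as^{-1})^{l(\pi)} = s^{-2l(\pi)}$ contains no $a$. The identity you actually need is $\sum_{\pi\in S_j} s^{-2l(\pi)} = s^{-j(j-1)/2}[j]!$, which is the usual $q$-factorial expansion of the Poincar\'e polynomial of $S_j$. This is where invertibility of $[j]!$ is used, exactly as you say. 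The absorption-by-pairs argument you sketch for $\sigma_i g_j$ is the standard one and goes through with the quadratic $\sigma_i^2 = a(s-s^{-1})\sigma_i + a^2$; the ascent/descent bookkeeping is routine once you write $w_\pi = \sigma_i w_{s_i\pi}$ in the descent case.
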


\begin{remark}
Setting $a=q^{-1/n}$ and $s=q$ on equation (\ref{antisym}) above, one sees that this resembles the relation (\ref{sourcesink}) where the source-sink diagram on the L.H.S. is equal to $[n]!q^{n(n-1/2)}\cdot g_n$.
\end{remark} 

\begin{lemma}\label{antisymlemma2}
For each $1<j\leq n$, there exists an element in $\mathcal{S}p^n(SL_n)$ that satisfies the relation (\ref{antisym}) and is given by
\begin{align}\label{antisymlemmapic}
    g_j = \frac{1}{[j]!} 
   \raisebox{-19mm}{\includegraphics[width=0.175\textwidth]{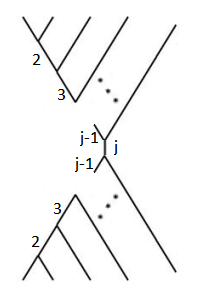}}
\end{align}
\end{lemma}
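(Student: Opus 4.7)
\bigskip

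\noindent\textbf{Proof plan.} The plan is to proceed by induction on $j$, verifying the recursive characterization (\ref{antisymrecur1})--(\ref{antisymrecur2}) of the Hecke antisymmetrizer. Write $G_j \in \mathcal{S}p^n(SL_n)$ for the element defined on the right-hand side of (\ref{antisymlemmapic}), i.e. $\frac{1}{[j]!}$ times the diagram consisting of $j$ strands labeled $1$ fused into a single interior edge labeled $j$ and then split back into $j$ strands labeled $1$. By the uniqueness statement preceding the lemma, it is enough to show that $G_j$ satisfies the recursion $G_1 = 1_1$ and
$$G_{j+1} \;=\; 1_1 \otimes G_j \;-\; \frac{[2][j]}{[j+1]}\,(1_1 \otimes G_j)\,(f_2 \otimes 1_{j-1})\,(1_1 \otimes G_j).$$
The case $j = 1$ is trivial and the case $j = 2$ is a direct check: the braid decomposition (\ref{braiddecomp}) expresses the crossing $\sigma_1$ on two $1$-strands as a linear combination of the identity and the single fuse-split through a $2$-edge, and solving this equation recovers exactly the formula (\ref{antisym}) for $g_2$ with $a = q^{-1/n}$ and $s = q$.

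For the inductive step, assume $G_k = g_k$ for all $k \leq j$. In the diagram $G_{j+1}$, apply the $6j$ move (\ref{6jckm1}) to the upper (fusion) half of the interior edge labeled $j+1$, replacing the single $(j+1)$-valent fusion by a tree which first fuses the right-most $j$ strands into an edge labeled $j$ and then joins with the remaining strand on the left. Do the same on the lower (splitting) half. The resulting diagram contains a small inner ``box'' (of the type appearing in (\ref{boxrelationsckm})) with labels $1$, $j$, and intermediate edges labeled $j-1$ and $j+1$. Applying Lemma \ref{boxrelationbasecase} to this inner box rewrites the diagram as a linear combination of two configurations: one is the ``straight-through'' configuration $1_1 \otimes G_j$, and the other is a configuration in which the two leftmost $1$-strands are bridged by an edge labeled $2$ before the $G_j$ on the right is applied on both sides, i.e. the term $(1_1 \otimes G_j)(f_2 \otimes 1_{j-1})(1_1 \otimes G_j)$ (here one uses the inductive identification $G_j = g_j$ together with locality to absorb the two adjacent $G_j$ factors on either side of $f_2$).

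The main obstacle is coefficient bookkeeping. The $6j$ moves and the subsequent application of Lemma \ref{boxrelationbasecase} produce a prefactor involving quantum binomials $\begin{bmatrix} j \\ 1\end{bmatrix}$ and the bracket $[i-j]$ specialized at the relevant labels; one must combine this with the normalizations $1/[j]!$ and $1/[j+1]!$ and apply the quantum integer identities $[2][j] = [j+1] + [j-1]$ (and the analogue used in the proof of Lemma \ref{recursion1}) to collapse everything to the target coefficient $\frac{[2][j]}{[j+1]}$. A secondary subtlety is that the ``correction'' term emerging from the box relation is a priori a single connected web, not an honest composition of three morphisms; rewriting it as the product $(1_1 \otimes G_j)(f_2 \otimes 1_{j-1})(1_1 \otimes G_j)$ requires pulling apart a pair of adjacent $j$-edges using Lemma \ref{generalizedrecursion1} and the bubble relations (\ref{ckmb1}, \ref{ckmb2}), and it is in this step that the hypothesis on $q$ being a root of unity of sufficiently large order (so that $[1], \ldots, [n]$ are units) is used. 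Once these two simplifications are made, the inductive recursion is matched and Blanchet's uniqueness gives $G_{j+1} = g_{j+1}$, completing the induction.
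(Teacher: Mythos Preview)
Your overall strategy matches the paper's: both proceed by induction on $j$ and verify the recursive characterization (\ref{antisymrecur1})--(\ref{antisymrecur2}), with the base case handled via the braid decomposition (\ref{braiddecomp}). Where you diverge is in the inductive step. The paper works from the \emph{right-hand side} of the recursion: it substitutes the diagrammatic formula for $g_m$ (induction hypothesis) into $(1_1\otimes g_m)(f_2\otimes 1_{m-1})(1_1\otimes g_m)$, resolves the crossing coming from $f_2$, and then applies a single ``absorption'' identity (two stacked copies of the $G_m$ tree sandwiching an $f_2$ collapse to a single tree with a prefactor $[2][3]\cdots[m-1]$) to land on $G_{m+1}$. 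You instead work from the \emph{left-hand side}: you restructure $G_{j+1}$ by $6j$ moves and then invoke Lemma~\ref{boxrelationbasecase} (with $i=1$) on the resulting inner bigon to split off the identity term and the $f_2$-bridge term.

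Both routes are valid and amount to the same equality read in opposite directions. The paper's computation is a bit more direct because the absorption identity does all of the work in one step, whereas your route leaves the ``secondary subtlety'' you flag --- rewriting the connected web coming out of the box relation as the honest triple product $(1_1\otimes G_j)(f_2\otimes 1_{j-1})(1_1\otimes G_j)$ --- which in the end requires essentially the same bubble/absorption computation the paper performs. So your plan is correct, but be aware that the step you describe as ``pulling apart a pair of adjacent $j$-edges using Lemma~\ref{generalizedrecursion1}'' is exactly where the paper's key identity lives; you will need it (or an equivalent) to finish.
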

\begin{proof}
The proof will be done by showing that such an element satisfies the equivalent statements in equations (\ref{antisymrecur1}) and (\ref{antisymrecur2}) and by induction on $j$.

\noindent\textit{Base case ($j=1$):} Setting $j=1$ in equation (\ref{antisym}), we get that such an element has to be an identity on a single strand. Hence, the condition (\ref{antisymrecur1}) is satisfied. Next, to verify the condition (\ref{antisymrecur2}), use the two equations satisfied by $f_j$ and $g_j$ below, where the second equation follows from the relation (\ref{freln1}) by substituting $a=q^{-1/n}$ and $s=q$.
\begin{align}
    1_2 &= f_2 + g_2\\
    [2]f_2 &= q^{-1}\OIdma + q^{1/n}\OPxa
\end{align}
Using the two equations above to solve for $g_2$ after resolving the crossing proves the base case. 

\noindent\textit{Induction step:} Assume the claim holds for all $g_j$ for $1<j\leq m$. We will show it's true for $j=m+1$. 
Start with
\[g_{m+1} = 1_1 \otimes g_m - \frac{[2][m]}{[m+1]}(1_1 \otimes g_m)(f_2\otimes 1_{j-1})(1_1 \otimes g_m)
\]
Now use the induction hypothesis on the R.H.S.. Note that we have three diagrams stacked together in the term that is being subtracted on the RHS. Simplify this diagram by resolving the crossing obtained by substituting $f_2$ and applying the relation shown below.  

\begin{align*}
    \def\svgwidth{12.85cm}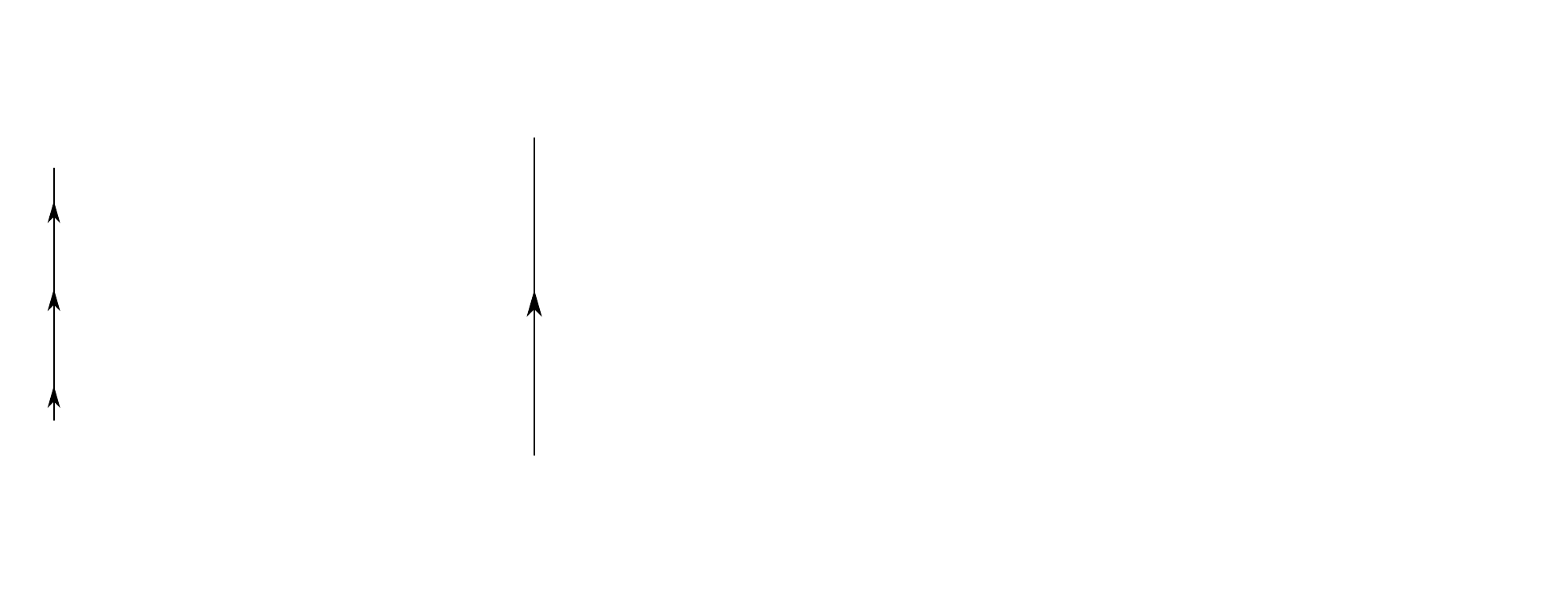
\end{align*}
Finally, combining the terms proves the claim. 
\end{proof}

\begin{lemma}\label{tauwelldef}
The functor $\tau$ is well-defined.
\end{lemma}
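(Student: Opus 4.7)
The plan is to verify that each of the four defining relations of $\Tilde{\mathcal{S}}$ —HOMFLYPT (\ref{homflysik}), twist (\ref{twistsik}), source-sink (\ref{sourcesink}), and unknot (\ref{unknotsik})— is satisfied in $\mathcal{S}p^n(SL_n)$ after replacing the Sikora generators by their images under $\tau$. Three of these four involve only the edge and crossing generators, and the images of those generators lie in the subcategory $\mathcal{MOY}^1(SL_n) \subseteq \mathcal{S}p^n(SL_n)$. By Theorem \ref{MOYtoHOMFLY}, $\mathcal{MOY}^1(SL_n)$ is equivalent as a braided spherical category to the specialized HOMFLYPT category $\mathcal{HMF}(SL_n)$ with $a=q^{-1/n}$, $s=q$, $v=q^{-n}$. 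Consequently the HOMFLYPT skein (\ref{homflysik}), the framing/twist relation (\ref{twistsik}), and the unknot evaluation $[n]$ in (\ref{unknotsik}) are automatic: they follow from combining the braid decomposition (\ref{braiddecomp}) with the bubble relations (\ref{ckmb1}, \ref{ckmb2}) and the standard quantum dimension calculation.

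Before verifying the remaining relation, I need to observe that $\tau$ is unambiguously defined on the $n$-valent vertex itself. Two fusion trees presenting the projection onto (respectively inclusion of) $\Lambda^n \CC^n$ differ by a finite sequence of $6j$ moves (\ref{6jckm1}) in $\mathcal{S}p^n(SL_n)$; the normalization factor $q^{n(n-1)/4}$ is insensitive to this choice, so the image is a well-defined morphism independent of the tree. The tag relations (\ref{tagswitchckm}--\ref{movetagckm}) then guarantee that rotations and reflections of the $n$-valent vertex —which are needed because morphisms in $\Tilde{\mathcal{S}}$ are regular isotopy classes of $n$-valent ribbon graphs— are also respected.

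The substantive case is the source-sink relation (\ref{sourcesink}). The source and sink under $\tau$ each carry a factor of $q^{n(n-1)/4}$, so the vertical stacking source-then-sink is sent to $q^{n(n-1)/2}$ times the tree-cotree composite pictured in Lemma \ref{antisymlemma2}. That lemma identifies this composite with $[n]!\, g_n$, where $g_n \in H_n$ is the unique antisymmetrizer characterized by (\ref{antisymrecur1}--\ref{antisymrecur2}). Substituting $a=q^{-1/n}$, $s=q$ into the closed form (\ref{antisym}) yields
\[
[n]!\, g_n \;=\; q^{n(n-1)/2}\sum_{\sigma\in S_n}(-q^{\,1-1/n})^{-l(\sigma)}\, w_\sigma ,
\]
and the claim is that this is precisely the image under $\tau$ of the right-hand side of (\ref{sourcesink}), once each coupon $\sigma$ is expanded as the positive braid $w_\sigma$ of length $l(\sigma)$ using (\ref{braiddecomp}).

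The step I expect to be the main obstacle is the coefficient bookkeeping for (\ref{sourcesink}): matching the $q^{n(n-1)/4}$ tag normalizations and the $q^{n(n-1)/2}$ factor in (\ref{antisym}) with the $[n]!$ coming from Lemma \ref{antisymlemma2}, while simultaneously reconciling the $(-as)^{-l(\sigma)}$ weights with Sikora's sign convention for his braid coupons. Once this arithmetic lines up, well-definedness of $\tau$ follows formally, as the isotopy-invariance checks (tree-independence and tag moves) are already guaranteed by Theorem \ref{allckmrelationtheorem1} and the tag relations in $\mathcal{S}p^n(SL_n)$.
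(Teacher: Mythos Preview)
Your proposal is correct and follows essentially the same route as the paper: handle (\ref{homflysik}), (\ref{twistsik}), (\ref{unknotsik}) by reducing to the HOMFLYPT-type relations already known to hold in $\mathcal{S}p^n(SL_n)$, and handle (\ref{sourcesink}) via Lemma~\ref{antisymlemma2} together with the closed form (\ref{antisym}) specialized at $a=q^{-1/n}$, $s=q$. The paper's own proof is terser---it simply says the first three relations follow ``by resolving the crossings'' rather than invoking Theorem~\ref{MOYtoHOMFLY}---and it does not separately spell out the tree-independence and isotopy checks you flag, but the substance is the same.
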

\begin{proof}
By resolving the crossings, one can see that the relations (\ref{homflysik}, \ref{twistsik}, \ref{unknotsik}) get mapped to relations in $\mathcal{S}p^n(SL_n)$. Further, Lemma (\ref{antisymlemma2}) tells us that there exist elements in $\mathcal{S}p^n(SL_n)$ that satisfy the RHS of the relation (\ref{sourcesink}), after some normalization. Hence, with the assignment shown below, one sees that $\tau$ respects the relation (\ref{sourcesink}). 

\begin{center}
    \includegraphics[scale=.7]{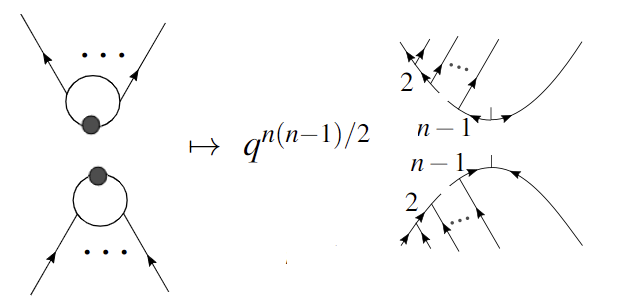}
\end{center}
Thus, this shows that $\tau$ is well-defined.
\end{proof}

\begin{lemma}\label{tauonto}
The functor $\tau$ is surjective.
\end{lemma}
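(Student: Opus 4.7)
The plan is to show that every morphism in $\mathcal{S}p^n(SL_n)$ between objects in $\{1^\pm\}$ lies, modulo the defining relations, in the image of $\tau$. Since $\tau$ is the identity on objects, it suffices to express each generator of $\mathcal{S}p^n(SL_n)$---the trivalent vertices, the crossings on all admissible labels, and the tags---as the relation-class of a composition and tensor product of images under $\tau$ of the generators of $\Tilde{\mathcal{S}}$, which are label-$1$ crossings and normalized fusion-tree-with-tag configurations.

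For the tag-free portion of a diagram I would invoke Theorem \ref{MOYtoHOMFLY}: the full subcategory of $\mathcal{MOY}(SL_n)$ on objects in $\{1^\pm\}$ is equivalent to the specialized HOMFLYPT category $\mathcal{HMF}(SL_n)$, whose only non-identity generators are crossings of label-$1$ edges. Consequently, every trivalent vertex with labels larger than $1$ and every higher-label crossing can be rewritten in $\mathcal{S}p^n(SL_n)$ as a linear combination of label-$1$ crossings and identities, and each such rewrite is visibly the image under $\tau$ of a morphism in $\Tilde{\mathcal{S}}$.

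For the tags I would argue locally. In a morphism $\psi \in \mathcal{S}p^n(SL_n)$ with $\{1^\pm\}$-boundary, each tag terminates a label-$n$ edge which, traced back through the trivalent vertices and modulo bubble and $6j$ reductions, is supported on a planar rooted tree whose leaves all lie on the boundary of $\psi$; the additivity of labels at each trivalent vertex then forces the tree to have exactly $n$ leaves, all of label $1$. Using the $6j$ relation (\ref{6jckm1}) repeatedly, any such tree can be deformed into the canonical fusion tree appearing in the definition of $\tau$ on the $n$-valent generator. Multiplying by $q^{n(n-1)/4}$ recovers exactly $\tau$ of the corresponding $n$-valent source or sink of $\Tilde{\mathcal{S}}$.

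The principal obstacle is to coordinate these two reductions, since the functors $\phi_i$ of Section \ref{fullsubcat} operate inside the tag-free world $\mathcal{MOY}$. I would address this by first using the tag relations (\ref{tagswitchckm}--\ref{movetagckm}) together with $6j$ moves to pull each tag, along with its attached fusion tree, into a disjoint region of the diagram, leaving the complementary region tag-free; the complement is then reduced to label-$1$ crossings by Theorem \ref{MOYtoHOMFLY}, the tag regions are replaced by normalized images of $n$-valent vertices as above, and stacking the pieces exhibits $\psi$ in the image of $\tau$, proving surjectivity.
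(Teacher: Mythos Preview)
Your overall strategy---split into tag-free and tagged pieces, reduce the former to label-$1$ crossings, and identify the latter with images of $n$-valent vertices---matches the paper's. The untagged reduction via Theorem \ref{MOYtoHOMFLY} is equivalent to what the paper does with Theorem \ref{reducecolorthm} and Lemma \ref{recursion1}.

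The tagged part, however, has a genuine gap. You claim that tracing back from a tag through the trivalent vertices yields, ``modulo bubble and $6j$ reductions,'' a planar rooted tree whose $n$ label-$1$ leaves lie on the boundary of $\psi$. This is not justified: the edges labeled $k$ and $n-k$ emanating from a tag need not feed into any tree structure at all. They may participate in internal bigons, boxes, crossings, or connect to other tags; nothing forces the local picture to simplify to a tree reaching the boundary. The phrase ``modulo bubble and $6j$ reductions'' is doing unearned work here, and your subsequent plan to ``pull each tag, along with its attached fusion tree, into a disjoint region'' presupposes exactly the tree structure whose existence is in question.

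The paper sidesteps this entirely by a local trick you do not use: around each tag it \emph{inserts} bubbles on the two incident edges (using relation (\ref{ckmb1}) in reverse, at the cost of a scalar $[k]!^{-1}[n-k]!^{-1}$), thereby manufacturing a fusion tree of $n$ label-$1$ strands directly at the tag regardless of what the rest of the diagram looks like. This small disk is then, by construction, exactly $\tau$ of an $n$-valent source or sink (see equation (\ref{tagtosink})). Everything outside the disk is tag-free on a possibly enlarged label set and is handled by the untagged case. The bubble-insertion move is the missing idea; with it, no ``tracing back'' or global separation of tags is needed.
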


\begin{proof}
For any diagram in $\mathcal{S}p^n(SL_n)$, let us consider the two cases: subgraphs with tag and without tag.
For the latter case, note that any untagged diagram in $\mathcal{S}p^n(SL_n)$ lives in $\mathcal{MOY}^n(SL_n)$ which is equivalent to $ \mathcal{MOY}^2(SL_n)$ from Lemma \ref{faithfulfunctorphi}. Further, from Lemma \ref{recursion1}, any such diagram can be written in terms of linear combination of diagrams with colors $\{1^\pm, 2^\pm\}$. Finally, any diagram with the color `$2$' can be replaced with a linear combination of diagrams with a crossing and parallel strands with color `$1$'. Recall that the preimage of a crossing with colors $\{1^\pm\}$ in $\mathcal{S}p^n(SL_n)$ is the same crossing in $\Tilde{\mathcal{S}}$.
Thus, any untagged diagram in $\mathcal{S}p^n(SL_n)$ can be obtained as an image of a combination of diagrams in $\Tilde{\mathcal{S}}$ under $\tau$.

Now, consider the case of tagged diagrams. Around each tag, we form a disk and proceed by creating bubbles on each of the two edges that meet at the tag. Using the definition of $\tau$, we obtain the preimage of a tagged diagram to be a diagram with a source or a sink. The procedure is demonstrated below.

\begin{align}\label{tagtosink}
\includegraphics[scale=.6]{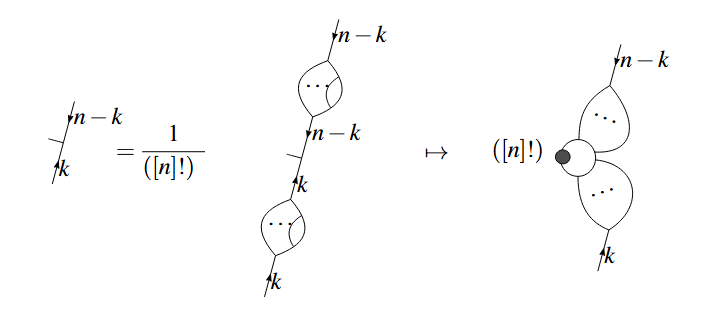}
\end{align}

For any vertices and edges that are not adjacent to the tag, we can identify those subgraphs (after applying the bubbling procedure as needed) as diagrams in $\mathcal{S}p^i_0(SL_n)$ whose preimage then lives in $\Tilde{\mathcal{S}}$ as discussed in the ``without tag" case above. 
Thus, this shows that the functor $\tau$ is surjective.

\end{proof}

\subsection{Faithfulness of $\tau$}
We prove faithfulness of $\tau$ by showing that there exists a well-defined surjective functor $\tau^{-1}: \mathcal{S}p^n(SL_n) \to \Tilde{\mathcal{S}}$.

Based on the observation we made in equation (\ref{tagtosink}) above, define $\tau^{-1}$ locally on the tagged vertex as follows:
\begin{center}
    \includegraphics[scale=.7]{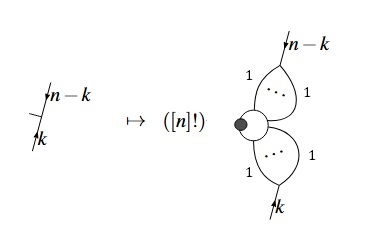}
\end{center}

Similarly, for each trivalent vertex, $\tau^{-1}$ is defined in the following manner (below each label on the RHS represents the number of parallel strands):\\
\begin{align}\label{trivalentvertextonvalent}
     \includegraphics[scale=.55]{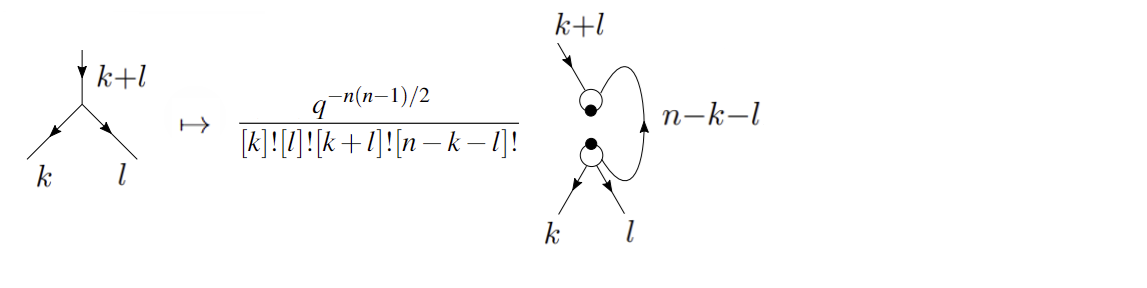}   
\end{align}

For a trivalent vertex where all the arrows in LHS of (\ref{trivalentvertextonvalent}) are reversed, $\tau^{-1}$ assigns it to an $n-$valent tangle with arrows reversed on the RHS. Note that the coefficient on the RHS stays the same as long as the corresponding labels on the vertex stay the same.

For crossings involving labels `$1$', $\tau^{-1}$ takes crossings in $\mathcal{S}p^n(SL_n)$ to those in $ \Tilde{\mathcal{S}}$. For closed knot components with label `$k\geq 2$', one first applies the bubbling procedure to introduce a pair of trivalent vertices. Then use (\ref{trivalentvertextonvalent}) to map it into $ \Tilde{\mathcal{S}}$.    

\begin{remark}
    Here we provide some explanation to motivate the assignment of $\tau^{-1}$ on a trivalent vertex as defined above. 
    To begin with, using the antisymmetrizer relation in (\ref{antisymlemmapic}), we had the following assignment of $\tau$. 
\begin{align}\label{antisymfortag1}
    \includegraphics[scale=.72]{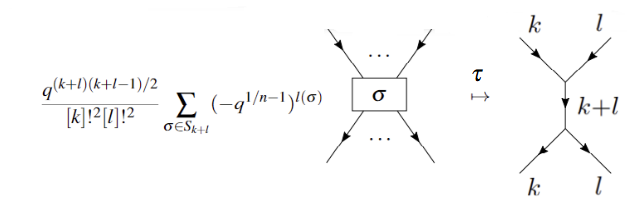}
\end{align}
We start with the trivalent vertex on the LHS of (\ref{trivalentvertextonvalent}), and apply the bubbling procedure on each edge as observed in (\ref{tagtosink}). Then, we use the following fact from Lemma $25$ in \cite{sik}.
\begin{align}\label{sikoralemma1}
 \includegraphics[scale=.7]{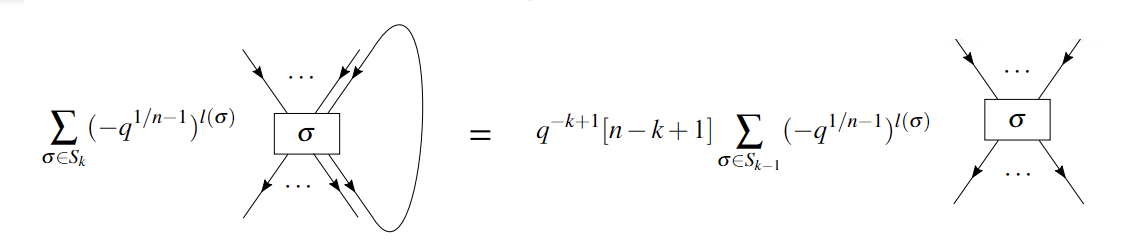}
\end{align}
Use induction on the number of strands being traced to the right on relation (\ref{sikoralemma1}) above, and make substitution using the relation (\ref{sourcesink}) to obtain the following equation (below, the labels on LHS represent the number of parallel strands):
\begin{align}\label{sikoralemma2}
\includegraphics[scale=.7]{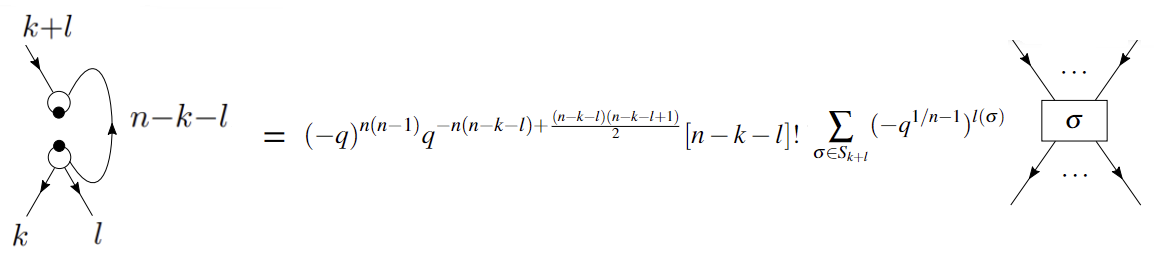}
\end{align}
Finally, simplify equation (\ref{sikoralemma2}) and compare with (\ref{antisymfortag1}). This shows that with this definition, $\tau^{-1}$ does indeed behave as the inverse of $\tau$.

\end{remark}

\begin{lemma}\label{taufaithful}
$\tau^{-1}$ is well-defined.
\end{lemma}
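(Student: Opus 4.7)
The plan is to verify that $\tau^{-1}$ respects every defining relation of $\mathcal{S}p^n(SL_n)$. Lemma \ref{tauonto} produces an explicit preimage in $\Tilde{\mathcal{S}}$ for each generator of $\mathcal{S}p^n(SL_n)$, so well-definedness of $\tau^{-1}$ on morphisms reduces to showing that for every relation of $\mathcal{S}p^n(SL_n)$ the preimages of the two sides are equal in $\Tilde{\mathcal{S}}$. By Theorem \ref{allckmrelationtheorem1}, the non-tag relations are consequences of Reidemeister invariance, the bubble relations and the $6j$ relations, so the verification splits cleanly into the untagged relations and the three tag relations (\ref{tagswitchckm}), (\ref{movetag1ckm}), (\ref{movetagckm}).

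For the untagged relations I would route everything through the filtration $\mathcal{MOY}^1(SL_n)\subseteq\cdots\subseteq\mathcal{MOY}^n(SL_n)$. Iterating Theorem \ref{reducecolorthm}, any untagged identity rewrites as an identity in $\mathcal{MOY}^1(SL_n)$, and Theorem \ref{MOYtoHOMFLY} identifies that category, as a spherical braided category, with $\mathcal{HMF}(SL_n)$. Since the preimages of the $\mathcal{MOY}^1(SL_n)$ generators under $\tau$ are the single strand and the crossing in $\Tilde{\mathcal{S}}$, and $\Tilde{\mathcal{S}}$ already satisfies the HOMFLYPT, twist and unknot relations (\ref{homflysik})--(\ref{unknotsik}), every untagged relation is automatically respected by $\tau^{-1}$. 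This part is essentially formal given the work of Sections \ref{boxes} and \ref{fullsubcat}.

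The main obstacle is the tag side. The preimage of a single tag is, after the normalization $q^{n(n-1)/4}$ built into $\tau$, an $n$-valent source or sink in $\Tilde{\mathcal{S}}$, which Lemma \ref{antisymlemma2} realises as a scalar multiple of the quantum antisymmetrizer $g_n$. To check each tag relation I would apply the bubbling construction from (\ref{tagtosink}) to convert tagged edges into $n$-valent vertices, and then transport the rest of the diagram across the vertex using the absorbing identity $\sigma_i g_n = -a s^{-1} g_n = g_n \sigma_i$ recalled from \cite{bla}. Under these moves, (\ref{tagswitchckm}) reduces to an equality of two natural realizations of the $n$-th antisymmetric invariant tensor, chosen so as to be reconciled by the factor $q^{n(n-1)/4}$, while (\ref{movetag1ckm}) and (\ref{movetagckm}) reduce to the absorbing property of $g_n$ against an external crossing, combined with the untagged Reidemeister identities already verified in the previous paragraph. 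The delicate part, and where I expect the real work to lie, is the careful bookkeeping of the scalars coming from the chosen fusion tree, the bubbling construction and the $g_n$-idempotent identity, so that the two sides of each tag relation acquire identical coefficients; once this matches for (\ref{tagswitchckm}), the two movement relations follow by the same calculation applied one strand at a time. After all relations are checked, $\tau^{-1}$ is well-defined, so $\tau$ is faithful, which together with Lemmas \ref{tauwelldef} and \ref{tauonto} establishes the desired equivalence.
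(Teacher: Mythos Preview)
Your overall architecture matches the paper's: split into untagged and tagged relations, dispose of the untagged ones via Theorem~\ref{reducecolorthm} and the HOMFLYPT identification, and then treat the tag relations separately. The untagged paragraph is essentially what the paper does.

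The tagged paragraph, however, diverges from the paper and contains a gap. The paper does \emph{not} run the tag relations through the absorbing identity $\sigma_i g_n=-as^{-1}g_n$. For (\ref{tagswitchckm}) it instead invokes the explicit Sikora relation moving an $n$-vertex past a single strand (equation~(\ref{tagswitchsik}), taken from \cite{sik}), and then iterates via the bubbling trick. For (\ref{movetagckm}) the key ingredient is a new computation, Lemma~\ref{addlemma1}, which identifies the $\tau^{-1}$-preimage of a generic trivalent vertex as an explicit source--sink diagram in $\Tilde{\mathcal{S}}$; with that in hand one writes down the preimages of both sides of (\ref{movetagckm}) and sees they differ by two applications of the tag-switch relation on the same number of strands, so the signs cancel. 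Your proposal that (\ref{movetag1ckm}) and (\ref{movetagckm}) ``reduce to the absorbing property of $g_n$ against an external crossing'' is where the gap lies: relation~(\ref{movetagckm}) has no crossing---it moves a tag through a trivalent vertex---so there is nothing for $g_n$ to absorb until you have first expressed that trivalent vertex inside $\Tilde{\mathcal{S}}$. That expression is exactly the content of Lemma~\ref{addlemma1}, and your sketch supplies no substitute for it. The scalar bookkeeping you flag as ``the delicate part'' is real, but it is downstream of this missing step, not a replacement for it.
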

\begin{proof}
Let us again consider the two cases: relations with and without tags. \\
For the latter case, since all the non-tagged relations are contained in $\mathcal{R}el^i(SL_n)$,
Theorem \ref{reducecolorthm} tells us that it suffices to consider the relations in $\mathcal{MOY}^n(SL_n) \cong \mathcal{MOY}^2(SL_n)$. Then the arguments presented in Theorem \ref{reducecolorthm} tell us how the bubble and $6j$ relations in $\mathcal{MOY}^2(SL_n)$ follow from the Reidemeister II and III relations respectively. Further, by writing the edge colored with `2' as the combination of diagrams with labels `1' (involving crossings), one shows that the braid relations in $\mathcal{MOY}^2(SL_n)$ also follows from the Reidemeister invariance and the specialized HOMFLYPT relation involving the label `1'. Hence, from Theorem \ref{allckmrelationtheorem1}, the image of all untagged relations under $\tau^{-1}$ lie in the set of relations in $\Tilde{\mathcal{S}}$.


In the case of relations with tags, there are three types of relations to be considered. Consider one of the two `tag migration' relations: the one that is a $6j$ relation (c.f. (\ref{6jckm1})) involving the color `$n$'. Recall the definition of $\tau$ on the generators. In particular, $\tau$ maps an $n-$web that is a sink (similarly, source) to a left-adjoint tree with a tag that is a sink (accordingly, a source), c.f. (\ref{tauongens}). Note that the choice for the image of the $n-$web is unique upto $6j$ move involving the color `$n$'. Hence, by construction, $\tau^{-1}$ sends all diagrams that are related by $6j$ moves involving the color `$n$' to the corresponding $n-$web which is unique in $\Tilde{\mathcal{S}}$.

Now consider the `tag switch' relation (\ref{tagswitchckm}). In order to understand how this relation is obtained in $\Tilde{\mathcal{S}}$, first observe the following:

\begin{align}\label{tagswitchsik}
\def\svgwidth{7cm}
\begingroup%
  \makeatletter%
  \providecommand\color[2][]{%
    \errmessage{(Inkscape) Color is used for the text in Inkscape, but the package 'color.sty' is not loaded}%
    \renewcommand\color[2][]{}%
  }%
  \providecommand\transparent[1]{%
    \errmessage{(Inkscape) Transparency is used (non-zero) for the text in Inkscape, but the package 'transparent.sty' is not loaded}%
    \renewcommand\transparent[1]{}%
  }%
  \providecommand\rotatebox[2]{#2}%
  \newcommand*\fsize{\dimexpr\f@size pt\relax}%
  \newcommand*\lineheight[1]{\fontsize{\fsize}{#1\fsize}\selectfont}%
  \ifx\svgwidth\undefined%
    \setlength{\unitlength}{722.37875631bp}%
    \ifx\svgscale\undefined%
      \relax%
    \else%
      \setlength{\unitlength}{\unitlength * \real{\svgscale}}%
    \fi%
  \else%
    \setlength{\unitlength}{\svgwidth}%
  \fi%
  \global\let\svgwidth\undefined%
  \global\let\svgscale\undefined%
  \makeatother%
  \begin{picture}(1,0.23215839)%
    \lineheight{1}%
    \setlength\tabcolsep{0pt}%
    \put(0,0){\includegraphics[width=\unitlength,page=1]{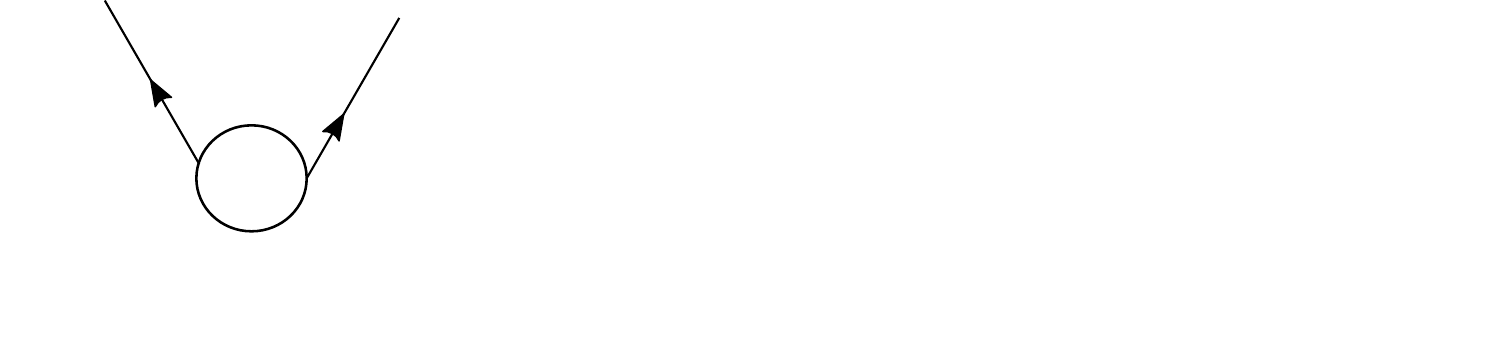}}%
    \put(0.14696517,0.16101218){\color[rgb]{0,0,0}\makebox(0,0)[lt]{\lineheight{1.25}\smash{\begin{tabular}[t]{l}$\cdots$\end{tabular}}}}%
    \put(0,0){\includegraphics[width=\unitlength,page=2]{tagswitchsik.pdf}}%
    \put(0.82049921,0.15285106){\color[rgb]{0,0,0}\makebox(0,0)[lt]{\lineheight{1.25}\smash{\begin{tabular}[t]{l}$\cdots$\end{tabular}}}}%
    \put(0,0){\includegraphics[width=\unitlength,page=3]{tagswitchsik.pdf}}%
    \put(0.40837578,0.09983205){\color[rgb]{0,0,0}\makebox(0,0)[lt]{\lineheight{1.25}\smash{\begin{tabular}[t]{l}$=(-1)^{n-1}$\end{tabular}}}}%
    \put(0,0){\includegraphics[width=\unitlength,page=4]{tagswitchsik.pdf}}%
  \end{picture}%
\endgroup%

\end{align}
The relation above follows from the relations (\ref{homflysik} - \ref{unknotsik}) in $\Tilde{\mathcal{S}}$. We refer the reader to \cite{sik} for more details. This relation tells us how the source (similarly, sink) moves past a strand labeled `$1$'. Using our definition of $\tau$ on generators, this tells us that $\tau^{-1}$ maps the relation (\ref{tagswitchckm}) with $k=1$ to the relation (\ref{tagswitchsik}) above. In order to obtain the general tag switch relation from this, one proceeds as in (\ref{tagtosink}) shown above by creating bubbles around the tag and migrating the tag to the strands with label `$1$', then repeatedly applying the relation (\ref{tagswitchsik}). It's worth noting that while doing this procedure, on the initial step, there is a choice to be made regarding which one of the two edges (labelled $k$ or $n-k$) to migrate the tag on. However, this doesn't make a difference since $(-1)^{k(n-k)}$ is always positive for $n$ odd and for $n$ even, it's enough to know the parity of either $k$ or $n-k$ as both $(-1)^k$ and $(-1)^{n-k}$ yield the same value. Thus the relations (\ref{homflysik} - \ref{unknotsik}) in $\Tilde{\mathcal{S}}$ imply the tag switch relation (\ref{tagswitchckm}). 

Finally, consider the ``tag migration'' relation (\ref{movetagckm}). Make the following observation using the definition of $\tau^{-1}$ (c.f. (\ref{trivalentvertextonvalent})).

\begin{align}
\raisebox{-15mm}{\includegraphics[width=0.12\textwidth]{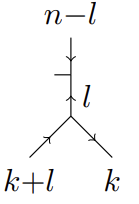}} 
\longmapsto \dfrac{q^{-3n(n-1)/4}}{[k]![l]![k+l]![n-l]![n-k-l]!}
\raisebox{-19mm}{\includegraphics[width=0.27\textwidth]{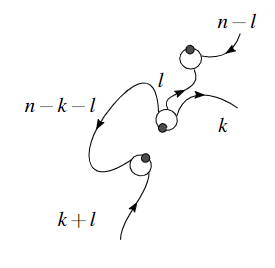}}
\label{lhstagrel}\\
\raisebox{-15mm}{\includegraphics[width=0.175\textwidth]{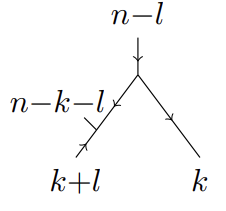}} 
\longmapsto \dfrac{q^{-3n(n-1)/4}}{[k]![l]![k+l]![n-l]![n-k-l]!}
\raisebox{-21mm}{\includegraphics[width=0.23\textwidth]{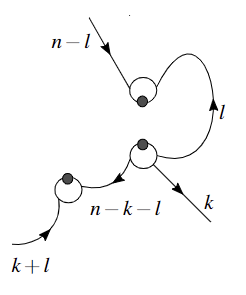}} 
\label{rhstagrel}
\end{align}

Then, notice that starting from the diagram on the image of $\tau^{-1}$ in (\ref{lhstagrel}), one obtains the diagram on (\ref{rhstagrel}) by applying the tag-switch relation twice on the two sinks connecting the edge labeled $n-k-l$. As the tag-switch relation on the same number of strands is applied twice, no negative sign appears. Hence, this shows that the tag migration relation (\ref{movetagckm}) is implied by the relations in $\Tilde{\mathcal{S}}$. Thus, this shows $\tau^{-1}$ is well-defined.
\end{proof}

\begin{theorem}\label{sikorackmequiv}
$\Tilde{\mathcal{S}}$ and $\mathcal{S}p^n(SL_n)$ are equivalent as ribbon tensor categories.
\end{theorem}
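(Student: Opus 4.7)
The plan is to assemble the theorem directly from the three preceding lemmas together with a brief check that $\tau$ respects the braided spherical structure. Concretely, I would argue that $\tau$ is an equivalence of categories by verifying essential surjectivity on objects, fullness, and faithfulness, and then observe that each of the extra pieces of structure is preserved at the level of generators, so preservation on arbitrary morphisms is automatic.

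First I would dispense with essential surjectivity on objects. Both $\Tilde{\mathcal{S}}$ and $\mathcal{S}p^n(SL_n)$ have objects given by finite sequences in $\{\pm\}$ (respectively $\{1^\pm\}$), and $\tau$ was defined on objects by the tautological identification $+\mapsto 1^+$, $-\mapsto 1^-$. So $\tau$ is actually a bijection on objects, which is stronger than essential surjectivity. Next, fullness is Lemma \ref{tauonto}: every morphism in $\mathcal{S}p^n(SL_n)$ can be rewritten via Theorem \ref{reducecolorthm}, Lemma \ref{recursion1}, and the bubbling procedure of (\ref{tagtosink}) as the image under $\tau$ of some morphism in $\Tilde{\mathcal{S}}$. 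Faithfulness is Lemma \ref{taufaithful}: since the candidate inverse $\tau^{-1}$ is well-defined on morphisms, $\tau$ cannot identify distinct morphisms in $\Tilde{\mathcal{S}}$.

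Having an equivalence of underlying categories, I would then verify that $\tau$ preserves the monoidal, braided, and spherical structures. Monoidality is immediate because $\tau$ was defined so that the tensor product (horizontal juxtaposition of diagrams) on each side is taken to the tensor product on the other side, and the unit object (empty sequence) is preserved. The braiding on both sides is generated by crossings of strands labeled $1^{\pm}$; by construction $\tau$ sends crossings to crossings, so the braid isomorphisms are preserved. For the spherical structure, the evaluation and coevaluation morphisms in $\Tilde{\mathcal{S}}$ are represented by cups and caps, and $\tau$ again sends these to the corresponding cups and caps in $\mathcal{S}p^n(SL_n)$; since the pivotal isomorphisms in both categories are determined by the (co)evaluations, they are preserved as well.

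The main obstacle has already been handled in Lemma \ref{taufaithful}: the real subtlety is not in the formal assembly but in showing that the CKM tag relations (\ref{tagswitchckm}), (\ref{movetag1ckm}), (\ref{movetagckm}) are consequences of the Sikora relations (\ref{homflysik}--\ref{unknotsik}) under $\tau^{-1}$, because without this one cannot define $\tau^{-1}$ on morphisms at all. With that in hand, the theorem is a formal consequence: $\tau$ is a fully faithful, essentially surjective functor of braided spherical categories, hence an equivalence of braided spherical categories. The proof of the final statement is then just a single sentence citing Lemmas \ref{tauwelldef}, \ref{tauonto}, and \ref{taufaithful}, together with the observation on generators that braidings and (co)evaluations correspond.
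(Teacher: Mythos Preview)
Your proposal is correct and matches the paper's approach exactly: the paper's proof is a single sentence citing Lemmas \ref{tauwelldef}, \ref{tauonto}, and \ref{taufaithful}. Your additional remarks on essential surjectivity on objects and preservation of the monoidal, braided, and spherical structure via generators are reasonable elaborations that the paper leaves implicit, but the core argument is identical.
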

\begin{proof}
This follows immediately from Lemmas \ref{tauwelldef}, \ref{tauonto} and \ref{taufaithful}.
\end{proof}

\begin{corollary}\label{sikorapresentation1}
The categories $\Tilde{\mathcal{S}}$ and $\mathcal{C}_n$ are equivalent as ribbon tensor categories.
\end{corollary}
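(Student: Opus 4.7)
The plan is to deduce this corollary by composing two braided spherical equivalences. From Theorem \ref{sikorackmequiv} we already have $\Tilde{\mathcal{S}} \simeq \mathcal{S}p^n(SL_n)$ as braided spherical categories, so it suffices to establish the remaining equivalence $\mathcal{S}p^n(SL_n)\simeq \mathcal{C}_n$ and then compose.

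For that second equivalence, I would invoke the main theorem of \cite{ckm}, which asserts that the full spider category $\mathcal{S}p(SL_n)$ is equivalent as a braided spherical category to $\mathcal{R}ep(SL_n)$, the monoidal category of $U_q(\mathfrak{sl}_n)$-modules generated by the fundamental representations $\Lambda^k \mathbb{C}^n$. Under this equivalence, the object $k^+$ (respectively $k^-$) in $\mathcal{S}p(SL_n)$ corresponds to $\Lambda^k\mathbb{C}^n$ (respectively its dual). Restricting this equivalence to the full subcategories whose objects are finite tensor products of the $1^\pm$ strands on the source side and of $V=\mathbb{C}^n$ and $V^*$ on the target side, we obtain the desired equivalence $\mathcal{S}p^n(SL_n)\simeq \mathcal{C}_n$. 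This restriction respects the braiding and the pivotal structure because the CKM equivalence itself does, and full subcategories of braided spherical categories inherit a braided spherical structure.

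Composing $\Tilde{\mathcal{S}} \xrightarrow{\tau} \mathcal{S}p^n(SL_n) \xrightarrow{\sim} \mathcal{C}_n$ then yields the claim. To check compatibility with the Reshetikhin--Turaev functor $RT_0$ implicit in Conjecture \ref{lesikconjecture}, one observes that the composite sends each generating morphism of $\Tilde{\mathcal{S}}$ (crossings, cups, caps, and $n$-valent sources/sinks) to the corresponding image in $\mathcal{C}_n$ prescribed by $RT_0$, up to the normalization $q^{n(n-1)/4}$ built into the definition of $\tau$ on the $n$-vertex. Hence the composite agrees with $RT_0$ (after absorbing the scalar), and an equivalence on the nose follows.

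The main technical step has already been done in Theorem \ref{sikorackmequiv}; the remaining work is essentially bookkeeping, and the principal thing to be careful about is that the full subcategory of $\mathcal{S}p(SL_n)$ generated on objects by $\{1^\pm\}$ really is $\mathcal{S}p^n(SL_n)$ (this is a matter of definition, since the objects in the CKM side are by construction sequences in $\{1^\pm,\ldots,(n-1)^\pm\}$, and restricting to $\{1^\pm\}$ as objects does not change the morphism spaces between such objects).
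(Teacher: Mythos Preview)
Your proposal is correct and follows the same overall route as the paper: compose the equivalence $\Tilde{\mathcal{S}}\simeq\mathcal{S}p^n(SL_n)$ from Theorem~\ref{sikorackmequiv} with the restriction of the CKM functor to objects in $\{1^\pm\}$, landing in $\mathcal{C}_n$.

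The one place where the paper is more careful than your sketch is in justifying that the restricted CKM functor $\Gamma^n:\mathcal{S}p^n(SL_n)\to\mathcal{C}_n$ is an equivalence of \emph{braided} categories, not merely of pivotal ones. You assert that ``the main theorem of \cite{ckm} \ldots\ asserts \ldots\ equivalent as a braided spherical category,'' but CKM's headline theorem is stated at the pivotal level; the braiding is treated separately in their Section~6. The paper therefore argues fullness and faithfulness of $\Gamma^n$ as a braided functor individually: fullness by combining CKM's proof with the definition of the braiding in their Section~6, and faithfulness by noting (via the last corollary of that section) that crossings decompose into webs, and then invoking Theorem~\ref{allckmrelationtheorem1} of the present paper to see that adjoining crossings subject to the Reidemeister relations does not impose any new relations beyond those already present in the non-braided spider. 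Your argument is not wrong, but you should cite CKM's Section~6 alongside their main theorem rather than attributing the braided statement to the main theorem alone.
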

\begin{proof}
First, define a functor $\Gamma^n: \mathcal{S}p^n(SL_n) \to \mathcal{C}_n$, where $\Gamma^n$ is a restriction of the functor to $\mathcal{S}p^n(SL_n)\subseteq \mathcal{S}p(SL_n)$ constructed in \cite{ckm} that goes from $\mathcal{S}p(SL_n) \to \mathcal{R}ep(SL_n)$.
Note that the main result from \cite{ckm} immediately implies that as a spherical tensor functor, this gives an equivalence of the two categories. In fact, fullness of $\Gamma^n$ as a braided tensor functor follows from the proof of fullness of their main result in \cite{ckm} along with the definition of braiding in Section 6 in \cite{ckm}. In order to check faithfulness of $\Gamma^n$ as a braided tensor functor, notice that from the final corollary in Section 6 of \cite{ckm}, it is known that the braiding can be expressed as a linear combination of boxes. Further, from Theorem \ref{allckmrelationtheorem1} it's known that the box relations are equivalent to the Reidemeister relations (check also for example \cite{moy}). Hence, this tells us that $\Gamma^n$ is faithful. Thus, $\Gamma^n$ gives an equivalence of $\mathcal{S}p^n(SL_n)$ with $\mathcal{C}_n$ as ribbon tensor categories.

Now, define a functor $RT^n: \Tilde{\mathcal{S}} \to \mathcal{C}_n $ where, $RT^n:= \Gamma^n \circ \tau$. Then the fact that $\Gamma^n$ is an equivalence of ribbon categories together with Theorem \ref{sikorackmequiv} implies that $RT^n$ is an equivalence of ribbon tensor categories. 

\end{proof}

\begin{theorem}[Proof of Conjecture \ref{lesikconjecture}]\label{proofofconjecture}
\end{theorem}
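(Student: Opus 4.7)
The plan is to derive the conjecture as an essentially formal consequence of Corollary \ref{sikorapresentation1}. Let $\pi: \mathfrak{S}_n^b \to \tilde{\mathcal{S}}$ denote the quotient functor; by construction of $\tilde{\mathcal{S}}$, its kernel is precisely the monoidal ideal $I$ generated by the Sikora relations (\ref{homflysik}--\ref{unknotsik}). My goal is to prove $\ker RT_0 = I$, which splits into the two containments.

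For the inclusion $I \subseteq \ker RT_0$, I would verify that each generating relation of $I$ is sent to zero by $RT_0$, i.e.\ that each Sikora relation holds in $\mathcal{C}_n$. These are classical identities in the representation theory of $U_q(\mathfrak{sl}_n)$: the specialized HOMFLYPT relation (\ref{homflysik}) is the eigenvalue decomposition of the $R$-matrix on $V\otimes V$, the twist relation (\ref{twistsik}) records the ribbon twist eigenvalue on $V$, the source-sink relation (\ref{sourcesink}) expresses the invariant tensor in $V^{\otimes n}$ via the antisymmetrizer (compatible with Lemma \ref{antisymlemma2}), and (\ref{unknotsik}) is the quantum dimension of $V$. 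All of these are already established in \cite{sik}, giving $I \subseteq \ker RT_0$ at once.

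For the reverse inclusion, the key observation is that $RT_0$ factors through the quotient as $RT_0 = RT^n \circ \pi$, where $RT^n: \tilde{\mathcal{S}} \to \mathcal{C}_n$ is the braided spherical equivalence constructed in Corollary \ref{sikorapresentation1}. Once this factorization is in hand, faithfulness of the equivalence $RT^n$ finishes the argument: if $f \in \ker RT_0$, then $RT^n(\pi(f))=0$, hence $\pi(f)=0$ in $\tilde{\mathcal{S}}$, hence $f \in \ker\pi = I$. Thus $\ker RT_0 \subseteq I$, and combined with the first containment this gives the conjecture.

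The main technical point (and expected obstacle) is verifying the factorization $RT_0 = RT^n \circ \pi$ on the nose. This reduces to checking equality on the generators of $\mathfrak{S}_n^b$: crossings, cups, caps, and the $n$-valent sources/sinks. For crossings and (co)evaluations the agreement is immediate from the definitions of $\tau$ and $\Gamma^n$ together with the braiding on $\mathcal{S}p^n(SL_n)$ described in Section 6 of \cite{ckm}. The delicate case is the $n$-valent vertex, where one must confirm that the chosen normalization factor $q^{n(n-1)/4}$ appearing in the definition of $\tau$ on generators matches the convention under which $RT_0$ sends the Sikora $n$-sink to the standard $U_q(\mathfrak{sl}_n)$-invariant map $V^{\otimes n} \to \mathbf{1}$. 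This is essentially a scalar-tracking exercise using Lemma \ref{antisymlemma2} and the normalization of the invariant tensor given by Sikora in \cite{sik}.
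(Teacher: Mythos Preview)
Your proposal is correct and follows the same overall strategy as the paper: both deduce the conjecture as a formal consequence of Corollary~\ref{sikorapresentation1}. The paper's own proof is extremely terse---it simply observes that $\tilde{\mathcal{S}}$ presents $\mathcal{C}_n$, that $RT_0$ is surjective onto the same target, and concludes that $\ker RT_0$ must be the Sikora ideal. Your version makes the logic explicit via the factorization $RT_0 = RT^n \circ \pi$ and the faithfulness of $RT^n$, which is exactly what the paper's argument needs but does not spell out; you also correctly flag the normalization of the $n$-valent vertex as the one place where a scalar must be tracked, a point the paper leaves implicit.
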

\begin{proof}
The proof can be understood using the following diagram whose details we provide below.

\begin{center}
\begin{tikzcd}
\mathcal{S}p^n(SL_n) \arrow[r, "\Gamma^n"] & \mathcal{C}_n \\
 \Tilde{\mathcal{S}} \arrow[u, "\cong"] \arrow[ru, "RT^n"]\\
\mathfrak{S}_n^b \arrow[u, "\tilde{\pi}"] \arrow[uur, bend right, "RT_0"]
\end{tikzcd}
\end{center}

Let $\tilde{\pi}$ be a functor from $\mathfrak{S}_n^b \to \Tilde{\mathcal{S}}$ given by the quotient of $\mathfrak{S}_n^b$ by the relations (\ref{homflysik}--\ref{unknotsik}). Note that
from the Corollary \ref{sikorapresentation1}, we have that the category $\mathfrak{S}_n^b/\ker(\tilde{\pi}) = \Tilde{\mathcal{S}}$ provides a presentation in terms of generators and relations of $\mathcal{C}_n$. Also recall that the full subcategory of $\mathcal{R}ep(SL_n)$ where the objects are finite tensor products of the standard representations is unique. Further, from \cite{lesik}, we know that the functor $RT_0$ is surjective. Thus, using the  Corollary \ref{sikorapresentation1} we get the following categorical equivalences
\[\mathfrak{S}_n^b/\ker(RT_0) \cong \mathfrak{S}_n^b/\ker(\tilde{\pi}) \cong \mathcal{C}_n
\]

It follows that $\ker(RT_0)$ is the monoidal ideal generated by elements given in relations (\ref{homflysik}--\ref{unknotsik}).


\end{proof}

\section{Skein module isomorphism}\label{skeinmodiso}
In this section, we recall the definition of a\textit{ skein category} coming from a ribbon tensor category.
We then make a comparison between the skein categories associated to the ribbon categories $\Tilde{\mathcal{S}}$, $\mathcal{S}p^n(SL_n)$ and $\mathcal{S}p(SL_n)$. We conclude the section by showing that the $SL_n$-skein modules associated to these skein categories are isomorphic. 

Below we recall the definition of a skein category \cite{jf, cooke, gjs}. 

\begin{definition}
    Let $\mathcal{A}$ be a ribbon tensor category over $R$ and $\Sigma$ an oriented surface. The \textit{skein category} $\mathbf{Sk}_\Sigma^\mathcal{A}$ is defined using the following data:
    \begin{itemize}
        \item A finite collection of oriented embedding of disks, $x_1,\ldots, x_n: \mathbb{D}\to \Sigma$, each labeled by objects in $\mathcal{A}$. The \textit{objects} in $\mathbf{Sk}_\Sigma^\mathcal{A}$ are then given by the choice of $x-$axis, $\vec{x_i}$, in each of these oriented labeled disks. 
        \item Let $\vec{X}:= \bigcup_i\Vec{x_i}$ and $\vec{Y}:= \bigcup_i\Vec{y_i}$ be objects in $\mathbf{Sk}_\Sigma^\mathcal{A}$. The \textit{morphisms} from $\vec{X}$ to $\vec{Y}$ are given by $R-$span of isotopy classes (relative to the boundary) of labeled ribbon graphs that represent morphisms in $\mathcal{A}$ \cite{rt} living in $\Sigma \times [0,1]$ such that the graph intersects the surface $\Sigma \times \{0\}$ at $\vec{X}$ and similarly intersects $\Sigma \times \{1\}$ at $\vec{Y}$. The ribbon graphs come with coupons which are embedded rectangles $I \times I$ (where $I$ is an interval) that represent a morphism in $\mathcal{A}$ from the ordered tensor product of incoming edges to the ordered tensor product of outgoing edges. Further, the morphisms satisfy local skein relations coming from the ribbon category $\mathcal{A}$ in a ball $\mathbb{D} \times I$ in the interior of $\Sigma \times [0,1]$. 
        \end{itemize}
    Note that the composition of morphisms in $\mathbf{Sk}_\Sigma^\mathcal{A}$ is given by stacking the embedded ribbon graphs on top of another and retracting to $\Sigma \times [0,1]$.
\end{definition}

Consider two equivalent ribbon tensor categories $\mathcal{A}_1$ and $\mathcal{A}_2$ over $R$, where the equivalence is given by a fully faithful ribbon tensor functor $\Tilde{\tau}:\mathcal{A}_1 \to \mathcal{A}_2$.   
\begin{theorem}\label{skeincatequiv}
   Let $\Sigma$ be an oriented surface, then the skein categories $\mathbf{Sk}_\Sigma^{\mathcal{A}_1}$ and $\mathbf{Sk}_\Sigma^{\mathcal{A}_2}$ are equivalent as $R-$linear categories.
\end{theorem}
\begin{proof}
We will prove this by showing that there exist two well-defined functors $\Tilde{\tau}_{\Sigma}: \mathbf{Sk}_\Sigma^{\mathcal{A}_1} \to \mathbf{Sk}_\Sigma^{\mathcal{A}_2}$ and $\Tilde{\tau}_{\Sigma}^{-1}: \mathbf{Sk}_\Sigma^{\mathcal{A}_2} \to \mathbf{Sk}_\Sigma^{\mathcal{A}_1}$ between these quotient categories. The $R$-linear functor $\Tilde{\tau}_{\Sigma}$ sends each object, recall these are labels on an oriented disk, $\Vec{x_i}$, to $\Tilde{\tau}(\Vec{x_i})$. This functor is essentially surjective as the functor $\Tilde{\tau}$ is essentially surjective.


Recall that any two morphisms $f,g \in \Hom_{\mathbf{Sk}_\Sigma^{\mathcal{A}_i}}(\Vec{X}, \Vec{Y})$ are same if either $f$ is isotopic relative to the boundary to $g$ or they are related to one another in a $3$-ball by a local relation in $\mathcal{A}_i$ and identical elsewhere. 
Since the relations are local, it suffices to analyze the fullness and faithfulness of our functor for relations 
between framed labelled graphs locally on a 3-ball.
Assume there exists a relation of morphisms in $\Hom_{\mathbf{Sk}_\Sigma^{\mathcal{A}_i}}(\Vec{X}, \Vec{Y})$ such that $\sum_\alpha f_\alpha = 0$. Then by our definition there exists a choice of $3$-balls such that $\sum_\alpha (f_\alpha \cap (\mathbb{D} \times I)) = 0$. Note that the construction in \cite{Tur2} gives us a well-defined bijection between the oriented labeled ribbon graphs in each $\mathbb{D}\times I$ and morphisms in $\Hom_{\mathcal{A}_i}(\bigotimes_{k=1}^n V_i,\bigotimes_{j=1}^m V_j)$. 
Thus, due to well-definedness of $\Tilde{\tau}$, and the fact that the relations are local, we get the well-definedness of $\Tilde{\tau}_{\Sigma}$. Similarly, the fullness of $\Tilde{\tau}_{\Sigma}$ follows due to that of $\Tilde{\tau}$.
Analogously, using the existence of the well-defined and full functor $\Tilde{\tau}^{-1}: \mathcal{A}_2 \to \mathcal{A}_1$ (by our assumption), one can show that $\Tilde{\tau}_{\Sigma}^{-1}$ is also well-defined and full.

Since all the relations are local and take place inside some $\mathbb{D}\times I$, this gives us a fully faithful $R$-linear functor $\Tilde{\tau}$.

\end{proof}

Now consider the ribbon tensor categories $\Tilde{\mathcal{S}}$ and $\mathcal{S}p^n(SL_n)$. Recall that $\Tilde{\mathcal{S}}$ is the spider category with Sikora's web relations and $\mathcal{S}p^n(SL_n)$ is the full subcategory (tensor generated by the standard representation and its dual) of the spider category with the CKM web relations. These were shown to be equivalent in Theorem \ref{sikorackmequiv}. Consequently, by Theorem \ref{skeincatequiv}, 
we get the following result:
\begin{corollary}\label{ckmskeincatequiv}
    Let $\Sigma$ be an oriented surface and $R$ be an integral domain (c.f. Section \ref{prelim}) where the quantum integers $[1],\ldots, [n]$ are invertible. We get the following equivalence of $R-$linear categories
    \[
    \mathbf{Sk}_\Sigma^{\Tilde{\mathcal{S}}} \cong \mathbf{Sk}_\Sigma^{\mathcal{S}p^n(SL_n)}
    \]
\end{corollary}

By definition, a skein cateory $\mathbf{Sk}_\Sigma^\mathcal{A}$ has a unit object $\mathbf{1}\in \mathbf{Sk}_\Sigma^\mathcal{A}$ given by empty disk labelings. Now, the \textit{skein algebra} of $\Sigma$ is given by the endomorphism algebra, $\End_{\mathbf{Sk}_\Sigma^\mathcal{A}}(\mathbf{1})$. 

Corollary \ref{ckmskeincatequiv} then gives us the following result.
\begin{corollary}\label{ckmskeinalgequiv}
    The skein algebras $\mathbf{Skalg}_\Sigma^{\Tilde{\mathcal{S}}}$ and $\mathbf{Skalg}_\Sigma^{\mathcal{S}p^n(SL_n)}$ are isomorphic as algebras over $R$, which is an integral domain (c.f. Section \ref{prelim}) where the quantum integers $[1],\ldots, [n]$ are invertible.
\end{corollary}

\begin{definition}\label{skeinmoddef}
    Let $M$ be an oriented $3$-manifold. The $\mathcal{A}$-\textit{skein module} is the $R$-module spanned by isotopy classes of closed $\mathcal{A}$-colored ribbon graphs in $M$ taken modulo the skein relations determined by any oriented ball $\mathbb{D}\times I \subset M$, denoted by $\mathbf{Skmod}_\mathcal{A}(M)$. 
\end{definition}

Note that the relative $\mathcal{A}$-skein module was defined in \cite{gjs}, which is a more general notion of a skein module. However, for our purpose in this paper, we will only be working with the $\mathcal{A}$-skein module. 

\begin{corollary}\label{ckmskeinmodequiv}
The skein modules $\mathbf{Skmod}_{\Tilde{\mathcal{S}}}(M)$ and $\mathbf{Skmod}_{\mathcal{S}p^n(SL_n)}(M)$ are isomorphic as modules over $R$, which is an integral domain (c.f. Section \ref{prelim}) where the quantum integers $[1],\ldots, [n]$ are invertible.
\end{corollary}

\begin{proof}

Define a morphism $\hat{\tau}: \mathbf{Skmod}_{\Tilde{\mathcal{S}}}(M) \to \mathbf{Skmod}_{\mathcal{S}p^n(SL_n)}(M)$ where we take each closed oriented $n$-valent ribbon graph, $\gamma$, and assign to it a linear sum of oriented trivalent ribbon graphs labeled with admissible edges in $\mathcal{S}p^n(SL_n)$ using the functor $\tau$ as defined in \ref{tauongens}. Note that this assignment is done in the same manner as in the proof of Theorem \ref{skeincatequiv} above. Call this $\hat{\tau}(\gamma)$. 

In order to check well-definedness of $\hat{\tau}$, recall that each relation between the closed framed webs take place in a ball. Consequently, for each skein relation, using the fact that these are framed webs, one obtains a thickened surface, $\Sigma_{\sqcup_{\alpha} \gamma_{\alpha}}$ containing the webs 
$\{\gamma_{\alpha}\}$. Note that our recipe in the proof of Theorem \ref{skeincatequiv} along with Corollary \ref{ckmskeinalgequiv} now gives us an isomorphism of skein modules $\mathbf{Skmod}_{\Tilde{\mathcal{S}}}(\Sigma_{\sqcup_{\alpha} \gamma_{\alpha}}) \cong \mathbf{Skmod}_{\mathcal{S}p^n(SL_n)}(\Sigma_{\sqcup_{\alpha} \tau(\gamma_{\alpha})})$, where recall that $\tau: \Tilde{\mathcal{S}} \to \mathcal{S}p^n(SL_n)$ was our fully faithful functor. 
Hence, $\hat{\tau}|_{\Sigma_{\sqcup_{\alpha} \gamma_{\alpha}}}$ is an isomorphism. Since every relation in $\mathbf{Skmod}_{\Tilde{\mathcal{S}}}(M)$ and $\mathbf{Skmod}_{\mathcal{S}p^n(SL_n)}(M)$ takes place in some submanifold (thickened surface) where the restriction of $\hat{\tau}$ is an isomorphism, due to locality of the skein relations, this implies that $\hat{\tau}:\mathbf{Skmod}_{\Tilde{\mathcal{S}}}(M) \to \mathbf{Skmod}_{\mathcal{S}p^n(SL_n)}(M)$ is an isomorphism of skein modules.


   
\end{proof}

Recall that by $\mathcal{S}p(SL_n)$ we mean the spider category which has as objects subsequences of $\{1^{\pm}, \ldots, (n-1)^\pm\}$ and the morphisms satisfy the CKM web relations. Since the skein module $\mathbf{Skmod}_{\mathcal{S}p(SL_n)}(M)$ is generated by $R$-linear combination of \textit{closed} webs, we get the following stronger result.

\begin{theorem}\label{bigckmskeinmodequiv}
The skein modules $\mathbf{Skmod}_{\Tilde{\mathcal{S}}}(M)$ and $\mathbf{Skmod}_{\mathcal{S}p(SL_n)}(M)$ are isomorphic as modules over $R$, which is an integral domain (c.f. Section \ref{prelim}) where the quantum integers $[1],\ldots, [n]$ are invertible.
\end{theorem}

\begin{proof}
We prove this by first showing that the skein modules
$\mathbf{Skmod}_{\mathcal{S}p^n(SL_n)}(M)$ and $\mathbf{Skmod}_{\mathcal{S}p(SL_n)}(M)$ are isomorphic. Corollary \ref{ckmskeinmodequiv} then gives us the isomorphism of skein modules $\mathbf{Skmod}_{\Tilde{\mathcal{S}}}(M)$ and $\mathbf{Skmod}_{\mathcal{S}p(SL_n)}(M)$.

Since there is an inclusion of categories $\mathcal{S}p^n(SL_n) \xhookrightarrow{} \mathcal{S}p(SL_n)$ as a full subcategory, this gives us inclusion (as $R$-linear categories) of the skein categories $\mathbf{Sk}_\Sigma^{\mathcal{S}p^n(SL_n)} \xhookrightarrow{} \mathbf{Sk}_\Sigma^{\mathcal{S}p(SL_n)}$ as follows.
Consider the identity functor, $\id: \mathcal{S}p^n(SL_n) \subset \mathcal{S}p(SL_n) \to \mathcal{S}p(SL_n)$, which is an equivalence onto its image, and then apply Theorem \ref{skeincatequiv} to obtain equivalence between the skein categories $\mathbf{Sk}_\Sigma^{\mathcal{S}p^n(SL_n)}$ and $\mathbf{Sk}_\Sigma^{\mathcal{S}p(SL_n)}$.

Using Corollary \ref{ckmskeinalgequiv} and a similar argument as in the proof of Corollary \ref{ckmskeinmodequiv}, we then get an inclusion of skein modules $\mathbf{Skmod}_{\mathcal{S}p^n(SL_n)}(M) \xhookrightarrow{i} \mathbf{Skmod}_{\mathcal{S}p(SL_n)}(M)$. Now we will show that the map $i$ is also surjective, for which we will use the fact that we are working with closed webs. 

For any $k$-labeled closed component in $\mathbf{Skmod}_{\mathcal{S}p(SL_n)}(M)$, we can use the bubbling procedure shown in Fig \ref{preimagefinder} and isotopy to obtain a representative of the closed component that lies in the image of a closed web in $\mathbf{Skmod}_{\mathcal{S}p^n(SL_n)}(M)$. Note that since all the relations take place in $3$-balls, this procedure can be applied to any edge of a closed web in $\mathbf{Skmod}_{\mathcal{S}p(SL_n)}(M)$ as needed to see it as an image of a closed web in $\mathbf{Skmod}_{\mathcal{S}p^n(SL_n)}(M)$. This proves surjectivity of $i$ and, hence, we have the result.

\begin{figure}
    \centering
    \def\svgwidth{12.5cm}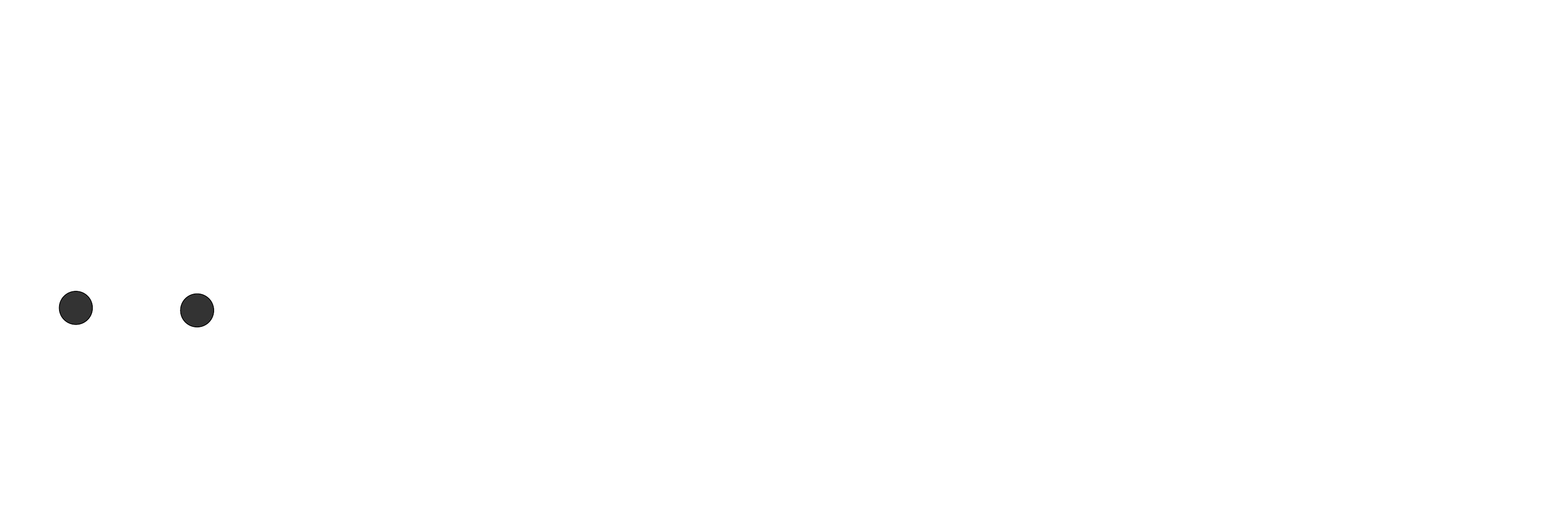
    \caption{Procedure to find a closed web in $\mathbf{Skmod}_{\mathcal{S}p^n(SL_n)}(M)$ that gives the preimage of a closed web in $\mathbf{Skmod}_{\mathcal{S}p(SL_n)}(M)$ under the map $i$.}
    \label{preimagefinder}
\end{figure}

\end{proof}

\newpage

\end{document}